\numberwithin{equation}{section}
\numberwithin{figure}{section}
\theoremstyle{definition}
\newtheorem{thm}{\protect\theoremname}[section]
  \theoremstyle{definition}
  \theoremstyle{definition}
  \newtheorem{defn}[thm]{\protect\definitionname}
  \theoremstyle{definition}
  \newtheorem{lem}[thm]{\protect\lemmaname}
  \theoremstyle{remark}
  \newtheorem{rem}[thm]{\protect\remarkname}
  \theoremstyle{definition}
  \newtheorem{cor}[thm]{\protect\corollaryname}
  \theoremstyle{definition}
  \newtheorem*{thm*}{\protect\theoremname}
  \theoremstyle{definition}
  \newtheorem{prop}[thm]{\protect\propositionname}
\numberwithin{equation}{section}
\numberwithin{figure}{section}
 \let\footnote=\endnote
\theoremstyle{definition}
\newtheorem{thmx}{Theorem}
\def\a{\alpha}
\def\b{\beta}
\def\s{\sigma}
\def\g{\gamma}
\def\L{\Lambda}
\def\R{\mathbb{R}}
\def\H{\mathbb{H}}
\def\N{\mathbb{N}}
\def\Z{\mathbb{Z}}
\def\ep{\varepsilon}
\def\vphi{\varphi}
\def\a{\alpha}
\def\b{\beta}
\def\s{\sigma}
\def\A{\mathcal{A}}
\def\B{\mathcal{B}}
\def\CC{\mathcal{C}}
\def\H{\mathsf{H}}
\def\g{\gamma}
\def\L{\mathcal{L}}
\def\HH{\mathcal{H}}
\def\F{\mathcal{F}}
\def\P{\mathsf{P}}
\def\R{\mathbb{R}}
\def\CC{\mathcal{C}}
\def\I{\mathrm{I}}
\def\J{\mathrm{J}}
\def\K{\mathrm{K}}
\def\W{\mathcal{W}}
\def\E{\mathcal{E}}
\def\M{\mathcal{M}}
\def\U{\mathcal{U}}
\def\RP{\mathbb{R}\mathbb{P}}
\newcommand{\id}{\text{Id}}
\def\E{\mathcal{E}}
\def\ep{\varepsilon}
\def\hol{H\"older }
\newcommand{\Wloc}{\mathcal{W}_{\text{loc}}}
\newcommand{\Sig}{\Sigma_T}
\newcommand{\slr}{\text{SL}_d(\R)}
\newcommand{\glr}{\text{GL}_d(\R)}
\newcommand{\gltwo}{\text{GL}_2(\R)}
  \providecommand{\corollaryname}{Corollary}
  \providecommand{\definitionname}{Definition}
  \providecommand{\factname}{Fact}
  \providecommand{\lemmaname}{Lemma}
  \providecommand{\propositionname}{Proposition}
  \providecommand{\remarkname}{Remark}
  \providecommand{\theoremname}{Theorem}
\providecommand{\theoremname}{Theorem}
\def\blfootnote{\gdef\@thefnmark{}\@footnotetext}
\address{School of Mathematics, Institute for Advanced Study,
Princeton, NJ, 08540, USA} 
\email{cbutler@princeton.edu}
\address{Department of Mathematics, The University of Chicago, Chicago, IL 60637, USA} 
\email{kihopark@math.uchicago.edu}
\begin{document}

\title{Thermodynamic formalism of $\gltwo$-cocycles with canonical holonomies}
\author{Clark Butler, Kiho Park}
\begin{abstract}
We study the norm potentials of \hol continuous $\gltwo$-cocycles over hyperbolic systems whose canonical holonomies converge and are \hol continuous. Such cocycles include locally constant $\gltwo$-cocycles as well as fiber-bunched $\gltwo$-cocycles.
We show that the norm potentials of irreducible such cocycles have unique equilibrium states. Among the reducible cocycles, we provide a characterization for cocycles whose norm potentials have more than one equilibrium states.
\end{abstract}
\date{\vspace{-5ex}}
\date{}
\maketitle
\section{Introduction}
\blfootnote{\textup{2010} \textit{Mathematics Subject Classification}: 37D35, 37C40, 37H15}
\blfootnote{Key words and Phrases: subadditive thermodynamic formalism, norm potentials, linear cocycles, equilibrium states, Gibbs property}

In this paper, we study matrix cocycles over hyperbolic systems and their thermodynamic formalism. Let $(\Sig,\s)$ be a subshift of finite type. For any continuous cocycle $\A \in C(\Sig,\glr)$, we define the associated \textit{norm potential}
$\Phi_\A:= \{\log \vphi_{\A,n}\}_{n \in \N}$ on $\Sig$, where
$$\vphi_{\A,n}(x) = \|\A^n(x)\| \text{ with } \A^n(x):=\A(\s^{n-1}x) \ldots\A(x).$$
The norm $\|\cdot\|$ is the standard operator norm on $\gltwo$.
From the submultiplicativity of the norm, the norm potential $\Phi_\A$ is subadditive: 
$$
\log\vphi_{\A,n+m} \leq  \log \vphi_{\A,n} + \log \vphi_{\A,m} \circ \s^n
$$
for all $m,n \in \N$.

Classical studies of thermodynamic formalism have been successfully extended to subadditive potentials such as $\Phi_\A$; see \cite{barreira1996non} and \cite{cao2008thermodynamic}. Let $\M(\s)$ be the set of $\s$-invariant probability measures. 
Denoting the corresponding subadditive pressure of $\Phi_\A$ by $\P(\Phi_\A)$, the \emph{subadditive variational principle} \cite{cao2008thermodynamic} states
\begin{equation}\label{eq: subadditive var prin}
\P(\Phi_\A) := \sup \{h_\mu(\s) +\F(\Phi_\A,\mu) \colon \mu \in \M(\s)\}
\end{equation}
where 
$$
\F(\Phi_\A,\mu) =\lambda_+(\A,\mu):= \lim\limits_{n\to\infty} \int\frac{1}{n}\log\|\A^n(x)\|~ d\mu(x)
$$
is the \textit{top Lyapunov exponent} of $\A$ with respect to $\mu$.
Any $\s$-invariant probability measure $\mu \in \M(\s)$ attaining the supremum in \eqref{eq: subadditive var prin} is called an \textit{equilibrium state} of $\Phi_\A$.

We will focus on the norm potentials of a class of $\alpha$-\hol $\gltwo$-cocycles $\A$ over $(\Sig,\s)$ with $\a \in (0,1]$ satisfying two extra conditions:
\begin{enumerate}[label=(\alph*)]
\item\label{eq: a}
Denoting the stable and unstable set of $\Sig$ by $\W^{s}$ and $\W^{u}$, the following limits converge: for any $y \in \W^{s}(x)$ and $z \in \W^{u}(x)$,
\begin{equation}\label{eq: canonical hol}
H^s_{x,y} :=\lim\limits_{n \to \infty}\A^n(y)^{-1}\A^n(x) \text{ and }H^{u}_{x,z}:=\lim\limits_{n \to -\infty}\A^n(z)^{-1}\A^n(x).
\end{equation}
When they exists, such $H^{s/u}$ are called the \textit{canonical holonomies} of $\A$.
\item\label{eq: b} The canonical holonomies are \hol continuous with some exponent $\b \in (0,\alpha]$: there exists $C>0$ such that for any $y \in \Wloc^{s}(x) \cup \Wloc^{u}(x)$, we have 
$$\|H^{s/u}_{x,y} - \id\| \leq C\cdot d(x,y)^\b.$$
\end{enumerate}
We denote by $\HH$ the set of $\alpha$-\hol cocycles that meet such requirements: $$\HH:=\{\A \in  C^\alpha(\Sig,\gltwo)  \colon \A \text{ satisfies } \ref{eq: a} \text{ and } \ref{eq: b}\}.$$
There are many natural classes of cocycles that belong to $\HH$. Such cocycles include locally constant cocycles as well as cocycles that are close to being conformal; the later are called the \emph{fiber-bunched} cocycles; see Section \ref{sec: 2} for details. The following definition of irreducibility has previously appeared in the literature, such as in \cite{bochi2019extremal}.
 
\begin{defn}\label{defn: reducible}
A cocycle $\A \in \HH$ is \textit{reducible} if there exists an $\A$-invariant and $H^{s/u}$-invariant line bundle over $\Sig$.
We say $\A \in \HH$ is \textit{irreducible} if $\A$ is not reducible. 
\end{defn}

From the upper semi-continuity of the entropy map $\mu \mapsto h_\mu(\s)$ and the top Lyapunov exponent $\mu \mapsto \lambda_+(\A,\mu)$, the norm potential $\Phi_\A$ of any continuous cocycle $\A \in C(\Sig,\glr)$ has at least one equilibrium state; see \cite{feng2011equilibrium}.
The main result of this paper establishes that irreducibility implies the uniqueness of such equilibrium states for cocycles in $\HH$:

\begin{thmx}\label{thm: A}
Let $\A \in \HH$. If $\A$ is irreducible, then the norm potential $\Phi_\A$ has a unique equilibrium state.	 
\end{thmx}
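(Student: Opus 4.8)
The plan is to deduce Theorem~\ref{thm: A} from the \emph{quasi-multiplicativity} of the singular value potential, the standard bridge between subadditive matrix potentials and uniqueness of equilibrium states. Recall $\Phi_\A$ is quasi-multiplicative if there are $C>0$ and a finite set $E$ of admissible words such that for all admissible $I,J$ there is $w\in E$ with $IwJ$ admissible and
\[
\vphi_{\A,|IwJ|}(IwJ)\;\ge\;C\,\vphi_{\A,|I|}(I)\,\vphi_{\A,|J|}(J);
\]
the reverse inequality (up to a constant) is automatic from \eqref{eq: SVP subadditive}, so this is the substantive half. Granting quasi-multiplicativity, Theorem~\ref{thm: A} follows from subadditive thermodynamic formalism: a quasi-multiplicative subadditive potential with uniformly bounded distortion over cylinders has a unique equilibrium state, namely its unique Gibbs measure (this goes back to Feng and Feng--K\"aenm\"aki, cf.\ \cite{feng2011equilibrium}, and was carried out in the planar locally constant case by B\'ar\'any--K\"aenm\"aki--Morris). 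Cocycles in $\HH$ have \hol holonomies, which is exactly what gives $\Phi_\A$ the bounded-distortion property; so the theorem reduces to proving quasi-multiplicativity.

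To establish quasi-multiplicativity I would first transport the problem onto a fixed copy of $\R^2$. Using the intertwining relation $\A^n(y)H^s_{x,y}=H^s_{\s^nx,\s^ny}\A^n(x)$ for $y\in\W^s(x)$ and its unstable analogue, the \hol bound \ref{eq: b}, and the local product structure of $\Sig$, one checks that for each word $I$ the matrix $\A^{|I|}(x)$ with $x\in[I]$ equals $H\,\A^{|I|}(p_I)\,H'$ for holonomy factors $H,H'$ of norm bounded uniformly together with their inverses, where $p_I\in[I]$ is the periodic point repeating $I$; in particular $\vphi_{\A,|I|}(I)\asymp\|\A^{|I|}(p_I)\|$, and for long words the leading singular directions are captured, up to bounded holonomy factors, by those of $\A^{|I|}(p_I)$. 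Combined with the elementary $2\times2$ estimate
\[
\|A W B\|\;\ge\;\|A\|\,\|B\|\,m(W)\,\bigl|\cos\mangle(W e_B,\,f_A)\bigr|,
\]
where $m(W)$ is the smallest singular value of $W$, $e_B$ the leading left singular direction of $B$ and $f_A$ the leading right singular direction of $A$ (and $m(\A^{|w|}(w))$ is bounded below on the finite set $E$), quasi-multiplicativity reduces to a projective statement: for all lines $\ell_1,\ell_2\subset\R^2$ occurring as transported leading and contracting directions of arbitrarily long admissible products, some $w\in E$ keeps $\mangle(\A^{|w|}(w)\ell_1,\ell_2)$ uniformly away from $\pi/2$.

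Such an $E$ is produced from irreducibility by contradiction. If no finite $E$ worked, pick $I_k,J_k$ witnessing failure; after transporting to $\R^2$ and passing to a subsequence, the normalized products converge to rank-one operators $A_{I_k}/\|A_{I_k}\|\to e\otimes f^{*}$ and $A_{J_k}/\|A_{J_k}\|\to e'\otimes(f')^{*}$, while the transported connector matrices (products of boundedly many cocycle blocks and holonomies, hence uniformly bounded with bounded inverse) converge to some $\widehat A_w$. Failure of quasi-multiplicativity forces $(e'\otimes(f')^{*})\,\widehat A_w\,(e\otimes f^{*})=0$, i.e.\ $\widehat A_w e\in\ker(f')^{*}$, for every $w\in E$; as the $\widehat A_w$ are invertible this pins down a single line in the reference fiber that all of them send to a single line. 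Letting $I,J$ vary and running this for every finite $E$, a finite-intersection argument on the compact bundle of lines over $\Sig$, together with the $\A$- and holonomy-equivariance of limiting leading directions, assembles these lines into a proper $\A$-invariant, bi-holonomy-invariant line bundle, contradicting irreducibility. (The one configuration where the rank-one limits degenerate is $\A$ conformal; there the argument is unnecessary, since then $\|\A^n\|=|\det\A^n|^{1/2}$, so $\Phi_\A$ is additive and cohomologous to the \hol function $\tfrac12\log|\det\A|$, with unique equilibrium state by Bowen's theorem.)

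The step I expect to be the main obstacle is the last one: turning ``every finite family of transported connector matrices shares a common line'' into a genuine proper sub-bundle of $\Sig\times\R^2$ invariant under both $\A$ and the canonical holonomies. This requires promoting the limiting directions $[e]$---limits of leading singular directions of longer and longer products, glued across fibers by holonomies---to a single section over all of $\Sig$, and verifying its invariance; here the \hol continuity \ref{eq: b} of the holonomies is used essentially, to make the transported objects depend continuously on the base point so that fiberwise limits are meaningful, and the restriction to $\gltwo$---where the projective fiber $\RP^1$ is one-dimensional, so ``invariant line'' and ``no invariant line'' are the only options---keeps the combinatorial bookkeeping under control.
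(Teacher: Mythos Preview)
Your reduction to quasi-multiplicativity and the appeal to Feng--K\"aenm\"aki match the paper exactly; the divergence is in how quasi-multiplicativity is obtained. The paper does not argue by contradiction from failure of QM. It first proves a structural dichotomy (Theorem~\ref{thm: B}): an irreducible $\A\in\HH$ is either \emph{typical}---there is a periodic point $p$ with $\A^{\text{per}(p)}(p)$ having eigenvalues of distinct modulus, together with homoclinic points $z_\pm$ whose holonomy loops $\psi_p^{z_\pm}$ twist the eigendirections---or is \hol conjugate to a conformal cocycle. The conformal branch is handled as you say. In the typical branch QM is proved \emph{constructively} (Proposition~\ref{prop: typical implies QM}): the connecting word $\K$ is built as a bounded-length excursion to the orbit of $p$, using iterates of $P=\A(p)$ to push the transported top singular direction of $\A^{|\I|}$ into a cone around $v_+$ (invoking $\psi_p^{z_-}$ first if that direction lands near $v_-$), with the mirror argument applied to the adjoint cocycle for $\J$. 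No limiting argument on singular directions enters.

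The gap in your route is exactly where you place it, and I do not see how to close it without essentially reproving Theorem~\ref{thm: B}. Failure of QM gives, after subsequences, a line $\ell$ in one reference fiber and a line $m$ in another such that every limit connector from the first to the second sends $\ell$ to $m$; it does not produce a line at \emph{every} point of $\Sig$, nor any reason for the lines coming from different bad sequences $(I_k,J_k)$ to cohere into a section. In the locally constant case this step is trivial because all connectors act on a single copy of $\R^2$ and generate the whole semigroup; with holonomies there is no common fiber, and connectors based at different points are not composable, so the ``finite-intersection argument on the compact bundle of lines'' has no obvious candidate closed sets to intersect. Separately, your conformal exception is not detectable from inside the argument: degeneration of the rank-one limit along \emph{one} sequence $I_k$ says nothing about other sequences, and the passage from ``$\|\A^n\|\cdot\|(\A^n)^{-1}\|$ uniformly bounded'' (which is what universal degeneration would mean) to ``\hol conjugate to a conformal cocycle'' is precisely the content of Kalinin--Sadovskaya (Proposition~\ref{prop: conformal subbundle})---the main external input the paper uses to establish Theorem~\ref{thm: B} in the first place.
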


Theorem \ref{thm: A} is similar to nowadays a folklore result that norm potentials of locally constant cocycles generated by irreducible sets of matrices have unique equilibrium states; see Remark \ref{rem: loc constant and irred then qm}. \textcolor{black}{For fiber-bunched cocycles, Theorem \ref{thm: A} may be obtained by manipulating a result of Bochi and Garibaldi \cite{bochi2019extremal};  we explain such approach in Subsection \ref{subsec: alternate thm a}.  
Theorem \ref{thm: A} applies for a larger class $\HH$ of cocycles, and we establish it via a different method. In order to do so,} we introduce the notion of weakly typical cocycles, which in some sense, applies to most cocycles in $\HH$. 

\begin{defn}\label{defn: weakly typical}
 We say $\A \in \HH$ is \textit{weakly typical} if 
\begin{enumerate}
\item (pinching) There exists a periodic point $p \in \Sig$ such that $\A^{\text{per}(p)}(p)$ has simple eigenvalues of distinct norms with corresponding eigendirections $v_+, v_- \in \R \mathbb{P}^1$;
\item (twisting) There exist $z_+,z_- \in \W^s(p) \cap \W^u(p) \setminus\{p\}$ such that for each $\tau \in \{+,-\}$, the holonomy loop $\psi_p^{z_\tau}:=H^s_{z_\tau,p}\circ H^u_{p,z_\tau}$ twists $v_\tau$:
$$\psi_p^{z_\tau}(v_\tau) \neq v_\tau.$$
\end{enumerate}
\end{defn}
\begin{rem}
Two points $z_+,z_-\in \W^s(p) \cap \W^u(p) \setminus\{p\}$ from the twisting condition above are homoclinic points of $p$.
More generally, given a periodic point $p \in \Sig$, we say $z \in \Sig$ is a \textit{homoclinic point} of $p$ if $z$ belongs to the set $$\H(p):=\W^s(p) \cap \W^u(p) \setminus \{p\}.$$ Equivalently, the homoclinic points of $p$ are characterized as the points other than $p$ whose orbits synchronously approach the orbit of $p$, both in forward and backward time.
\end{rem}

\begin{rem}
The notion of typical cocycles is first introduced by Bonatti and Viana in \cite{bonatti2004lyapunov} for fiber-bunched $\slr$-cocycles.
Weak typicality introduced as in Definition \ref{defn: weakly typical} is weaker than that of \cite{bonatti2004lyapunov}. We elaborate more on such differences in Remark \ref{rem: difference in typical}.
\end{rem}

Define $$\U_w:=\{ \A \in  \HH \colon \A \text{ is weakly typical}\}.$$
Since the canonical holonomies $H^{s/u}$ vary continuously in $\A \in \HH$, $\U_w$ is open in $\HH$. 
Bonatti and Viana \cite{bonatti2004lyapunov} showed that typical cocycles form a dense subset of the set of fiber-bunched $\slr$-cocycles and that $\U_w^c$ has infinite codimension. In fact, the same proof there readily extends to establish the same properties for $\U_w$ considered as a subset of $\HH$.
Next theorem establishes a trichotomy among irreducible cocycles in $\HH$ with weak typicality being one of the alternatives.

\begin{thmx}\label{thm: B}
Suppose $\A \in \HH$ is irreducible. Then either
\begin{enumerate}
\item $\A$ is weakly typical (i.e., $\A \in \U_w$), or
\item there exist two bi-holonomy invariant line bundles interchanged by $\A$, or
\item\label{eq: conformal thm B} there is a \hol conjugacy of $\A$ into the group of linear conformal transformations of $\R^2$.
\end{enumerate}
\end{thmx}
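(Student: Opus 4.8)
The plan is to show that if $\A\in\HH$ is irreducible and not typical, then alternative (2) holds. I first claim such an $\A$ has \emph{no} pinching periodic point. Suppose $p$ were pinching, with $P:=\A^{\text{per}(p)}(p)$ having eigendirections $v_+,v_-\in\RP^1$. Since $\A$ is not typical, twisting must fail at $p$, so for some $\tau\in\{+,-\}$ we have $\psi_p^{z}(v_\tau)=v_\tau$ for all $z\in\H(p)$. Writing $\psi_p^{z}=H^s_{z,p}\circ H^u_{p,z}$ and using $(H^s_{z,p})^{-1}=H^s_{p,z}$, this yields $H^u_{p,z}(v_\tau)=H^s_{p,z}(v_\tau)$, so the line $v_\tau$ is carried to a single well-defined line at every point of $\H(p)$ by both holonomies. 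Propagating this line along orbits by $\A$ and along stable and unstable sets by the holonomies over the homoclinic class of $p$, and then extending by continuity (the holonomies being \hol and $\overline{\H(p)}=\Sig$), produces a proper $\A$-invariant, bi-holonomy invariant line bundle over $\Sig$, contradicting irreducibility. Hence $\A^{\text{per}(q)}(q)$ has eigenvalues of equal absolute value for \emph{every} periodic $q$.

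\textbf{Step 2: the holonomy group.} Fix a periodic $p$ and let $G_p\le\gltwo$ be the group generated by $P:=\A^{\text{per}(p)}(p)$ together with all holonomy loops $\psi_p^{z}$, $z\in\H(p)$, acting on the fiber over $p$. An invariant line for $G_p$ would, exactly as in Step 1, extend to a proper invariant line bundle over $\Sig$, so irreducibility of $\A$ forces $G_p$ to act irreducibly on $\R^2$; in particular $P$ is diagonalizable, since a nontrivial Jordan-block return matrix would, via the closing lemma below, force every $\psi_p^{z}$ and hence $G_p$ to preserve its unique invariant line. The crux is the claim that \emph{every} element of $G_p$ has eigenvalues of equal absolute value. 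I would prove this by a closing lemma: after replacing $\s$ by a power so that $p$ becomes a fixed point, each word $P^{a_1}(\psi_p^{z_1})^{\epsilon_1}\cdots P^{a_k}(\psi_p^{z_k})^{\epsilon_k}$ with the $a_i$ large is realized by a genuine periodic orbit of $\A$ that stays near $p$ for $a_i$ steps between successive homoclinic excursions along the orbits of the $z_i$, the excursion matrices collapsing to the $(\psi_p^{z_i})^{\epsilon_i}$ up to holonomy corrections controlled by condition \ref{eq: b}; and since $\langle P\rangle$ is relatively compact modulo homotheties ($P$ being diagonalizable with equal-modulus eigenvalues), these words are dense, modulo homotheties, in the image of $G_p$ in $\mathrm{PGL}_2(\R)$. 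As such a periodic orbit is not pinching, and ``eigenvalues of equal absolute value'' is invariant under conjugation and rescaling and closed under limits, the claim follows.

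\textbf{Step 3: invariant conformal structure and conjugacy.} Modulo homotheties, $G_p$ thus acts on the space $\mathbb{H}^2$ of conformal structures on $\R^2$ by isometries with no hyperbolic element (these are exactly the matrices with real eigenvalues of distinct modulus) and no parabolic element: if $g\in G_p$ were non-diagonalizable with invariant line $\ell$, then by irreducibility some $h\in G_p$ has $h(\ell)\ne\ell$, whence $\mathrm{tr}(g^{m}h)$ grows linearly in $m$ while $\det(g^{m}h)$ grows at twice that exponential rate, so $g^{m}h\in G_p$ would have eigenvalues of distinct modulus for large $m$, contrary to Step 2. A group of isometries of $\mathbb{H}^2$ with neither parabolic nor hyperbolic elements is elementary; and since fixing a point or a pair of points of $\partial\mathbb{H}^2\cong\RP^1$ would mean $G_p$ leaving a line in $\R^2$ invariant, irreducibility forces $G_p$ to fix an interior point, i.e.\ to preserve a conformal structure $Q_p$ on the fiber over $p$. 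Setting $Q_z:=(H^u_{p,z})_*Q_p$ for $z\in\H(p)$ — which agrees with $(H^s_{p,z})_*Q_p$ because $(H^u_{p,z})^{-1}\circ H^s_{p,z}=(\psi_p^{z})^{-1}\in G_p$ fixes $Q_p$ — and propagating $Q$ by $\A$ and by the holonomies (well defined as the relevant monodromy loops lie in $G_p$) gives an $\A$-invariant, bi-holonomy invariant field of conformal structures on $\overline{\H(p)}=\Sig$, which extends to a \hol field $x\mapsto Q_x$ over $\Sig$ by the \hol dependence of the holonomies. Writing $Q_x=g(x)^{*}Q_0$ for the standard structure $Q_0$, with $g\colon\Sig\to\gltwo$ \hol (take $g(x)$ symmetric positive definite via a matrix square root), the conjugate cocycle $\widetilde{\A}(x)=g(\s x)\,\A(x)\,g(x)^{-1}$ preserves $Q_0$ and hence takes values in the linear conformal group of $\R^2$; this is the conjugacy in alternative (2).

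\textbf{The main obstacle} is the closing lemma used in Step 2: one must show that arbitrary words in the return matrix $P$ and the holonomy loops $\psi_p^{z}$ are realized, up to a controlled vanishing error, by genuine periodic orbits, so that ``no pinching'' can be upgraded from $P$ alone to the whole group $G_p$. This is the analogue, in the class $\HH$, of the periodic approximation theorems of Kalinin and of Bonatti--Viana, and it is exactly where the \hol bound \ref{eq: b} on the holonomies is used, to control the accumulation of holonomy corrections along the shadowing orbit. A secondary, by now routine, point is that the invariant field of conformal structures built over the dense set $\H(p)$ does not degenerate and extends to a \hol field over all of $\Sig$, together with the existence of a \hol positive-definite gauge $g$. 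Alternatively, Steps 2 and 3 may be replaced by: deducing from ``no pinching'', via Kalinin's periodic approximation of Lyapunov exponents, that $\lambda_+(\A,\mu)=\lambda_-(\A,\mu)$ for every $\mu\in\M(\s)$; arguing that this, together with irreducibility, makes $\A$ uniformly quasiconformal; and invoking a conformality criterion for uniformly quasiconformal \hol cocycles over hyperbolic systems.
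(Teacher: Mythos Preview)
Your Step~1 is exactly the paper's argument (its Lemma~\ref{lem: irred 1}/Corollary~\ref{cor: irred 1}): an irreducible cocycle with a pinching periodic point is automatically typical, so ``irreducible and not typical'' forces $\lambda_+(\A,p)=\lambda_-(\A,p)$ at every periodic $p$. From there the paper takes a much shorter route than your Steps~2--3: it simply invokes Kalinin--Sadovskaya (Proposition~\ref{prop: conformal subbundle}, from \cite{kalinin2010linear}), which says that equal extremal exponents at all periodic points force either an $\alpha$-\hol invariant conformal structure on $\Sig\times\R^2$ or a proper $\alpha$-\hol invariant sub-bundle, and then Bochi--Garibaldi (Proposition~\ref{prop: bochi-garibaldi}) to see any such sub-bundle is bi-holonomy invariant; irreducibility excludes the second alternative. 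Your Steps~2--3 instead re-derive the $\gltwo$ case of Kalinin--Sadovskaya by hand, via the holonomy group $G_p$ and its elliptic action on the hyperbolic plane of conformal structures. This is a legitimate and more self-contained route --- it makes the role of the holonomy loops explicit and avoids citing \cite{kalinin2010linear} as a black box --- but, as you correctly flag, the closing/shadowing lemma needed to upgrade ``no pinching'' from $P$ alone to every element of $G_p$ is exactly the technical core that \cite{kalinin2010linear} already packages, so you are not really saving work. Your density argument (inserting large powers of $P$ between holonomy loops, using relative compactness of $\langle P\rangle$ modulo homotheties) and your parabolic-exclusion trace computation are both correct, so once the closing lemma is in place your argument goes through. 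Finally, the alternative you sketch at the very end --- Kalinin's periodic approximation $\Rightarrow$ uniform quasiconformality $\Rightarrow$ \hol conformal structure --- \emph{is} the paper's proof.
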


We also study thermodynamic formalism of reducible cocycles in $\HH$. For any $\A \in \HH$, condition \ref{eq: b} implies that the map $(x,y) \mapsto H^{s/u}_{x,y}$ is $\b$-\hol continuous for some $\b \in (0,\alpha]$. Hence, an $\A$-invariant and $H^{s/u}$-invariant line bundle of a reducible cocycle from Definition \ref{defn: reducible} must also be $\b$-\hol continuous. 
By straightening out this line bundle, the reducible cocycle $\A$ admits a $\b$-\hol conjugacy $\CC \colon \Sig \to \gltwo$ such that $\B(x):=\CC(\s x)\A(x)\CC(x)^{-1}$ is upper triangular for every $x \in \Sig$. Since $\Phi_\A$ and $\Phi_\B$ have the same set of equilibrium states, the study of reducible cocycles in $\HH$ reduces to the study of \hol cocycles taking values in upper triangular matrices.

\begin{thmx}\label{thm: C}
Let $\B \in C^\b(\Sig,\gltwo)$ be an $\b$-\hol cocycle taking values in the group of upper triangular matrices:
\begin{equation}\label{eq: B}
\B(x):=\begin{pmatrix}
a(x) & b(x)\\
0 & c(x)
\end{pmatrix}.
\end{equation}
The norm potential $\Phi_\B$ has a unique equilibrium state, unless 
\begin{enumerate}
\item$\log |a|$ is not cohomologous to $\log |c|$, and
\item$\P(\log |a|) = \P(\log |c|).$
\end{enumerate}
If these two conditions hold, then $\Phi_\B$ has exactly two distinct ergodic equilibrium states. 
\end{thmx}

\begin{rem}
Theorem \ref{thm: C} is a more general result than the first two Theorems in the sense that the assumptions are weaker; while we assume that the cocycle $\B$ is \hol continuous with some exponent $\b>0$, we do not require that $\B$ belong to $\HH$.
In particular, once a reducible cocycle $\A \in \HH$ is conjugated to a $\b$-\hol cocycle $\B$ of the form \eqref{eq: B} via the invariant line bundle, extra conditions \ref{eq: a} and \ref{eq: b} on $\A$ no longer play a role in studying thermodynamic formalism of $\Phi_\B$.
\end{rem}

The result for reducible cocycles in $\HH$ is summarized in the following corollary whose proof appears in Section \ref{sec: thm C}.

\begin{cor}\label{cor: reducible0}
Suppose $\A \in \HH$ is reducible and admits a $\b$-\hol conjugacy $\CC \colon \Sig \to \gltwo$ for some $\beta>0$ such that $\B(x):=\CC(\s x)\A(x)\CC(x)^{-1}$ takes values in upper triangular matrices. Then $\Phi_\A$ has a unique equilibrium state unless two conditions from Theorem \ref{thm: C} hold for $\B$, in which case there are two ergodic equilibrium states for $\Phi_\A$. 
\end{cor}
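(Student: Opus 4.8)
The plan is to derive Corollary~\ref{cor: reducible0} directly from Theorem~\ref{thm: C} by transporting its conclusion through the conjugacy $\CC$. The underlying principle is that a continuous (in particular $\b$-\hol) cocycle conjugacy perturbs the singular value potential only by a uniformly bounded additive sequence, and hence leaves the subadditive pressure and the set of equilibrium states unchanged; this is why, as the preceding remark observes, the membership $\A\in\HH$ plays no further role once the reduction to an upper triangular $\B$ has been made.

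First I would check that the conjugated cocycle $\B=\CC(\s\,\cdot)\,\A\,\CC^{-1}$ is a genuine $\b$-\hol cocycle into the upper triangular matrices, so that Theorem~\ref{thm: C} applies to it: $\CC$ is $\b$-\hol by hypothesis, $\A$ is $\alpha$-\hol with $\alpha\ge\b$, matrix multiplication and inversion are locally Lipschitz on $\gltwo$, and $\Sig$ is compact, so $\B\in C^\b(\Sig,\gltwo)$, and by construction $\B$ has the form \eqref{eq: B} with $a,c$ nonvanishing (since $ac=\det\B\neq0$), so $\log|a|,\log|c|\in C^\b(\Sig,\R)$. Next I would record the comparison estimate: from $\A^n(x)=\CC(\s^n x)^{-1}\,\B^n(x)\,\CC(x)$ and submultiplicativity of the operator norm,
\[
\bigl|\log\vphi_{\A,n}(x)-\log\vphi_{\B,n}(x)\bigr|\ \le\ 2\sup_{y\in\Sig}\bigl(\log\|\CC(y)\|+\log\|\CC(y)^{-1}\|\bigr)\ =:\ 2M<\infty
\]
for all $n\in\N$ and $x\in\Sig$, the supremum being finite since $\CC$ is continuous with values in $\gltwo$ on the compact set $\Sig$. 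Consequently $\F(\Phi_\A,\mu)=\F(\Phi_\B,\mu)$ for every $\mu\in\M(\s)$, the two subadditive pressures in \eqref{eq: subadditive var prin} coincide, and therefore $\Phi_\A$ and $\Phi_\B$ have exactly the same equilibrium states (Remark~\ref{rem: cohomologous}); since the base system $(\Sig,\s)$ is unchanged, a measure is ergodic relative to $\Phi_\A$ iff it is ergodic relative to $\Phi_\B$.

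Finally I would invoke Theorem~\ref{thm: C} for $\B$: $\Phi_\B$ has a unique equilibrium state unless $\log|a|$ is not cohomologous to $\log|c|$ and $\P(\log|a|)=\P(\log|c|)$, i.e.\ conditions~\eqref{eq: (1)} and \eqref{eq: (2)} hold, in which case $\Phi_\B$ has two ergodic equilibrium states. Combining with the previous paragraph yields the corollary. I do not anticipate a real obstacle here: the statement is essentially Theorem~\ref{thm: C} read through the conjugacy, and the only things that need to be checked — that $\B$ is honestly $\b$-\hol and that the bounded-difference comparison forces identical equilibrium states — are routine.
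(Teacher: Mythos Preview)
Your proof is correct and follows the same route as the paper: apply Theorem~\ref{thm: C} to $\B$ and transfer the conclusion to $\Phi_\A$ via the bounded-difference argument of Remark~\ref{rem: cohomologous}. The paper's own proof takes this transfer for granted and instead spends its effort on a point you omit, namely well-posedness: it checks that if two H\"older conjugacies $\CC_1,\CC_2$ both put $\A$ in upper-triangular form, the resulting diagonal potentials satisfy $\log|a_1|\sim\log|c_2|$ and $\log|a_2|\sim\log|c_1|$, so conditions~\eqref{eq: (1)} and~\eqref{eq: (2)} are intrinsic to $\A$ rather than to the particular $\CC$ chosen---a nice observation, though not strictly required for the corollary as stated.
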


The paper is organized as follows. In Section \ref{sec: 2}, we introduce the setting of our results and survey relevant results in thermodynamic formalism.
Then we prove Theorem \ref{thm: C} in Section \ref{sec: thm C} and Theorem \ref{thm: B} in Section \ref{sec: thm B}. Using these results, Theorem \ref{thm: A} is established in Section \ref{sec: thm A}. 
In Section \ref{sec: 6}, we explain how the results can be applied to the derivative cocycles, restricted to the unstable bundles, of certain Anosov diffeomorphisms.
\\

\noindent\textbf{Acknowledgments}.
The authors would like to thank the anonymous referees who have pointed out a mistake in the initial draft. Their comments have helped improve the paper by a lot. The authors also thank Ben Call, Ping Ngai Chung, and Amie Wilkinson for helpful discussions.

\section{Preliminaries}\label{sec: 2}
\subsection{Symbolic dynamics} Let $T$ be a $q \times q$ square \textit{adjacency matrix} with entries from $\{0,1\}$, and $\Sig$ be the set of bi-infinite $T$-admissible sequences in $\{1,\ldots,q\}^\Z$ defined by
$$\Sig = \{(x_i)_{i \in \Z} \in \{1,\ldots,q\}^\Z \colon T_{x_{i},x_{i+1}} = 1 \text{ for all }i \in \Z\}.$$
Throughout the paper, we will always assume that $T$ is \textit{primitive}, meaning that there exists $N\in \N$ such that all entries of $T^N$ are positive. The primitivity of $T$ is equivalent to the topological mixing property of $(\Sig,\s)$. 
Denoting the left shift on $\Sig$ by $\s$, the dynamical system $(\Sig,\s)$ is called the \textit{subshift of finite type} defined by $T$.
Fix $\theta \in (0,1)$, and we equip $\Sig$ with a metric $d$ defined as follows: for $x = (x_i)_{i\in \Z}$ and $y = (y_i)_{i\in \Z} \in \Sig$, 
$$d(x,y) :=\theta^k$$
where $k$ is the largest integer such that $x_i = y_i$ for all $|i|<k$. Equipped with such metric, $(\Sig,\s)$ becomes a hyperbolic homeomorphism on a compact metric space. In particular, the \textit{local stable set} of $x$ is defined as
$$\Wloc^s(x):=\{y\in \Sig \colon x_i = y_i \text{ for all }i \in \N_0\}.$$
The \textit{stable manifold of $x$} is defined as
$$\W^s(x) := \{y \in \Sig \colon \s^ny \in \Wloc^s(\s^nx)\text{ for some }n \in \N_0\},$$
and is characterized by the set of $y\in \Sig$ such that $ d(\s^nx,\s^ny)\to 0$ as $n$ tends to infinity. 
Similarly, we define the \emph{local unstable set} $\Wloc^u(x)$ as the set of $y\in \Sig$ with $x_i = y_i$ for all $i \leq 0$ and the \emph{unstable set} $\W^u(x)$ as the set of $y \in \Sig$ with $\s^n y  \in\Wloc^u(\s^n x)$ for some $n \leq 0$.

For any $x,y\in \Sig$ with $x_0 = y_0$, we define the \textit{bracket} of $x$ and $y$ by
\begin{equation}\label{eq: local product}
[x,y]:= \Wloc^u(x) \cap \Wloc^s(y) \in \Sig.
\end{equation}

An \textit{admissible word of length $n$} is a word $i_0\ldots i_{n-1}$ with $i_j \in\{1,\ldots ,q\}$ such that $T_{i_j,i_{j+1}}=1$ for each $0 \leq j\leq n-2$. We denote the set of all admissible words of length $n$ by $\L(n)$, and let $\L:=\bigcup\limits_{n=0}^{\infty} \L(n)$ be the set of all admissible words. For any $\I = i_0\ldots i_{n-1} \in \L(n)$, we define the \textit{cylinder defined by $\I$} as
$$[\I]:=\{(x_j)_{n\in \Z} \in \Sig \colon x_j = i_j \text{ for all } 0 \leq j \leq n-1\}.$$

\subsection{Holonomies and cocycles in $\HH$}
In this subsection, we describe natural classes of cocycles that belong to $\HH$. First, we introduce the formal definition of holonomies.

Let $\A \in C(\Sig,\gltwo)$ be a continuous $\gltwo$-valued function. A \textit{cocycle generated by $\A$}, denoted again by $\A$ by an abuse of notation, is the skew product map
\begin{align*}
\A \colon \Sig \times \R^2 &\to \Sig \times \R^2 ,\\
(x,v) &\mapsto (\s x,\A(x)v). 
\end{align*}
For any $n \in \N$, we have $\A^n(x,v) :=(\s^nx,\A^n(x)v)$ where 
$$\A^n(x):= \A(\s^{n-1}x) \ldots \A(x).$$
Note that $\A^n$ satisfies the \textit{cocycle equation}:
$$\A^{m+n}(x)= \A^m(\s^nx)\A^n(x) \text{ for all } m,n \in \N.$$
As $\s$ is invertible, we define $\A^0 \equiv \id$ and $\A^{-n}(x):=\A^n(\s^{-n}x)^{-1}$. Then the above cocycle equation holds for all $m,n \in \Z$.

\begin{defn}\label{defn: local holonomy} A \textit{local stable holonomy} for $\A$ is a family of matrices $H^{s}_{x,y} \in \gltwo $ defined for any $x,y \in \Sig$ with $y \in \Wloc^s(x)$ such that 
\begin{enumerate}
\item $H^s_{x,x} = \id$ and $H^s_{y,z}\circ H^s_{x,y} = H^s_{x,z}$ for any $y,z \in \Wloc^s(x)$,
\item $\A(x) =H^s_{\s y,\s x} \circ  \A(y) \circ H^s_{x,y}$,
\item $H^s \colon (x,y) \mapsto H^{s}_{x,y} $ is continuous as $x,y$ vary continuously while satisfying the relation $y \in \Wloc^s(x)$.
\end{enumerate}
A \textit{local unstable holonomy $H^u_{x,y}$ is likewise defined as above with $s$ replaced by $u$ and $\s$ replaced by $\s^{-1}$ wherever it occurs.}
\end{defn}

Even though a local stable holonomy $H^s_{x,y}$ is defined only for $y \in \Wloc^s(x)$ in Definition \ref{defn: local holonomy}, it can be extended to a \emph{global stable holonomy} $H^{s}_{x,y}$ defined for any $y \in \W^s(x)$ not necessarily in $\Wloc^s(x)$:
$$H^s_{x,y} := \A^n(y)^{-1} H^s_{\s^nx,\s^ny} \A^n(x),$$
where $n \in \N$ is any positive integer such that $\s^ny \in \Wloc^s(\s^nx)$. Similarly, a local unstable holonomy can be extended to a \emph{global unstable holonomy}. Such extension to global holonomies is coherent with the second property from Definition \ref{defn: local holonomy}. It is easily verified that if the canonical holonomies $H^{s/u}$ from \eqref{eq: canonical hol} converge, then they satisfy the properties listed in Definition \ref{defn: local holonomy}.

We now describe a natural class of cocycles, called fiber-bunched cocycles, that belongs to $\HH$. In the following definition, recall that $\theta \in (0,1)$ is the constant defining the metric on $\Sig$.

\begin{defn} \label{defn: fiber-bunched}
Let $\A \in C^\alpha(\Sig,\gltwo)$. We say $\A$ is \textit{fiber-bunched} if for every $x \in \Sig$,
$$\|\A(x)\|\cdot \|\A(x)^{-1}\| \cdot \theta^{\alpha} <1.$$
\end{defn}

Notice that conformal cocycles and their perturbations are fiber-bunched; indeed, fiber-bunched cocycles may be thought of as cocycles close to being conformal.
Let $$C^\alpha_b(\Sig,\gltwo):= \{ \A \in C^\alpha(\Sig,\gltwo) \colon \A \text{ is fiber-bunched}\}$$
be the set of fiber-bunched cocycles.
It is clear from the definition that the set of fiber-bunched cocycles $C^\alpha_b(\Sig,\gltwo)$ is open in $C^\alpha(\Sig,\gltwo)$. 

One important consequence of the \hol continuity and the the fiber-bunching assumption on $\A\in C^\alpha_b(\Sig,\gltwo)$ is the convergence of the canonical holonomies $H^{s/u}$ from \eqref{eq: canonical hol}.
Moreover, for $\A \in C^\alpha_b(\Sig,\gltwo)$ the canonical holonomies vary $\alpha$-\hol continuously (i.e., condition \ref{eq: b} holds with the same exponent as the cocycle $\A$) on the base points \cite{kalinin2013cocycles}: there exists $C>0$ such that for any $y \in \Wloc^{s}(x) \cup \Wloc^{u}(x)$,
$$
\|H^{s/u}_{x,y}-\id\| \leq C \cdot d(x,y)^\alpha.
$$
This shows that the set of fiber-bunched cocycles $C^\alpha_b(\Sig,\gltwo)$ is a subset of $\HH$.

We note that \hol continuity and the fiber-bunching assumption on the cocycle are sufficient but not necessary for the convergence of the canonical holonomies $H^{s/u}$ from \eqref{eq: canonical hol}. For instance, the canonical holonomies $H^{s/u}$ always converge for locally constant cocycles, another natural class of cocycles that belongs to $\HH$.

\begin{defn}
A cocycle $\A$ is \textit{locally constant} if there exists $k \in \N_0$ such that for every $x\in \Sig$, the value of $\A(x)$ depends only on $x_{-k}\ldots x_k \in \L(2k+1)$.
\end{defn}
\begin{rem}
For any locally constant cocycle $\A$ over $\Sig$, by re-coding the base dynamical system $(\Sig,\s)$ into a new subshift of finite type $(\Sigma_{\widetilde{T}},\s)$, we may and we will assume that $\A(x)$ only depends on the zero-th entry $x_0$. Such cocycles are also known as the \textit{one-step cocycles}.	
\end{rem}
For locally constant cocycles, the canonical holonomies $H^{s/u}$ from \eqref{eq: canonical hol} trivially converge to the identity matrix. Hence, locally constant cocycles belong to $\HH$.

Another natural class of cocycles that belongs to $\HH$ is the derivative cocycles of certain Anosov diffeomorphisms restricted to 2-dimensional invariant subbundles. Via Markov partitions, such derivative cocycles can be realized as cocycles over subshifts of finite type, and the results stated in the introduction applies. In Section \ref{sec: 6}, we will discuss such class of cocycles in further details.

We conclude the discussion on holonomies and cocycles in $\HH$ by describing a property called \textit{bounded distortion} that is satisfied by norm potentials of cocycles in $\HH$:
for any $\A \in \HH$, there exists $C\geq 1$ such that for any $n\in \N$, $\I \in \L(n)$, and $x,y \in [\I]$,  
\begin{equation}\label{eq: bdd distortion}
C^{-1}\leq \frac{\vphi_{\A,n}(x)}{\vphi_{\A,n}(y)} = \frac{\|\A^n(x)\|}{\|\A^n(y)\|} \leq C.
\end{equation}

Indeed, the ratio $\|\A^n(x)\|/\|\A^n(y)\|$ is equal to the product of two fractions $\|\A^n(x)\|/\|\A^n(z)\|$ and $\|\A^n(z)\|/\|\A^n(y)\|$ where $z:=[y,x]$ is the bracket \eqref{eq: local product} of $y$ and $x$. For the first fraction, notice that $\A^n(x)$ is equal to $H^s_{\s^nz,\s^nx}\A^n(z)H^s_{x,z}$. From  \hol continuity of the canonical holonomies, the norm of $H^s_{\s^nz,\s^nx}$ and $H^s_{x,z}$ are uniformly bounded above and below independent of $x,z$, and $n$. It then follows that $\|\A^n(x)\|/\|\A^n(z)\|$ is also bounded above and below by a uniform constant. Applying the same argument to $\|\A^n(z)\|/\|\A^n(y)\|$ using instead the unstable holonomy establishes \eqref{eq: bdd distortion}.

Note that locally constant cocycles satisfy the bounded distortion property \eqref{eq: bdd distortion} with the constant $C=1$.

\subsection{Thermodynamic formalism} In this subsection, we briefly survey the theory of both additive and subadditive thermodynamic formalism.

Let $f$ be a homeomorphism on a compact metric space $(X,d)$. A subset $E\subset X$ is \textit{$(n,\ep)$-separated} if any two distinct $x,y \in E$ are at least $\ep$-apart in the $d_n$ metric:
$$d_n(x,y):=\max\limits_{0 \leq i \leq n-1} d(f^ix,f^iy) \geq \ep.$$
From the compactness of $X$, the cardinality of any $(n,\ep)$-separated set $E$ is finite.

For any continuous function $\vphi \colon X \to \R$ (often called a \textit{potential}), the \textit{pressure} $\P(\vphi)$ is defined as
$$
\P(\vphi) := \lim\limits_{\ep\to 0}\limsup\limits_{n \to \infty}\frac{1}{n} \log  \sup \Big\{\sum\limits_{x \in E} e^{S_n\vphi(x)} \colon E \text{ is a }(n,\ep)\text{-separated subset of }X\Big\}
$$
where $S_n\vphi := \vphi+\vphi\circ f  +\ldots+\vphi\circ f^{n-1}$ is the $n$-th Birkhoff sum of $\vphi$. Denoting the set of $f$-invariant probability measures by $\M(f)$, the pressure $\P(\vphi)$ satisfies the following \emph{variational principle}:
\begin{equation}\label{eq: var prin}
\P(\vphi) = \sup\limits_{\mu \in \M(f)} \Big\{h_\mu(f) +\int \vphi ~d\mu\Big\}, 
\end{equation} 
where $h_\mu(f)$ is the measure-theoretic entropy of $\mu$; see \cite{walters2000introduction}. Any $f$-invariant probability measure achieving the supremum in \eqref{eq: var prin} is called an \textit{equilibrium state} of $\vphi$.

The existence and the number of equilibrium states depend on both the potential $\vphi$ and the system $(X,f)$. For instance, any potential over a system whose entropy map $\mu \mapsto h_{\mu}(f)$ is upper semi-continuous has 	at least one equilibrium state; such systems include hyperbolic systems and asymptotically entropy-expansive systems \cite{bowen1972entropy}, \cite{misiurewicz1976topological}.

The question on the finiteness or the uniqueness of the equilibrium state is more subtle. One result along this line is the following theorem of Bowen \cite{bowen1974some} which establishes the uniqueness of the equilbrium states for any \hol potentials over topologically mixing hyperbolic systems.

\begin{prop}\cite{bowen1974some}\label{prop: bowen}
 Let $(\Sig,\s)$ be a mixing subshift of finite type and $\vphi \colon \Sig \to \R$ a \hol continuous function. Then there exists a unique equilibrium state $\mu_\vphi \in \M(\s)$ for $\vphi$, characterized as the unique $f$-invariant measure satisftying the following Gibbs property: there exists $C \geq 1$ such that for any $n \in\N $, $\I \in \L(n)$, and $ x \in [\I]$, we  have
$$
C^{-1} \leq  \frac{\mu_\vphi([\I])}{e^{-n\P(\vphi)}e^{S_n\vphi(x)}} \leq C.
$$
\end{prop}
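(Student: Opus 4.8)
The plan is to follow Bowen's classical transfer operator (Ruelle--Perron--Frobenius) approach, which simultaneously produces the equilibrium state and establishes the Gibbs property. First I would reduce to a one-sided subshift: since $\vphi$ is \hol continuous on the two-sided shift $(\Sig,\s)$, a standard cohomology argument replaces $\vphi$ by a cohomologous potential $\widetilde\vphi$ depending only on the non-negative coordinates, hence defined on the one-sided shift $(\Sig^+,\s)$; cohomologous potentials have the same pressure, the same equilibrium states (pulled back appropriately), and Gibbs property is insensitive to adding a coboundary. So it suffices to work on $\Sig^+$.

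On $\Sig^+$, I would introduce the transfer operator $\mathcal{L}_\vphi$ acting on $C(\Sig^+)$ by $(\mathcal{L}_\vphi g)(x) = \sum_{\s y = x} e^{\vphi(y)} g(y)$. The key step is the Ruelle--Perron--Frobenius theorem: using the \hol regularity of $\vphi$ together with the mixing (primitivity) hypothesis on $T$, one shows $\mathcal{L}_\vphi$ has a simple maximal positive eigenvalue $\l = e^{\P(\vphi)}$ with a strictly positive \hol eigenfunction $h$, and the dual $\mathcal{L}_\vphi^*$ has an eigenmeasure $\nu$ with $\mathcal{L}_\vphi^*\nu = \l\nu$; moreover $\l^{-n}\mathcal{L}_\vphi^n g \to h \int g\, d\nu$ uniformly. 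The positivity of the eigenfunction and the spectral gap come from a Birkhoff-cone / Hilbert-metric contraction argument, where mixing of $T$ is exactly what makes the cone of \hol positive functions map strictly inside itself after finitely many iterations. Then $\mu := h\,\nu$ (suitably normalized) is $\s$-invariant, and iterating the eigenmeasure identity on cylinders $[\I]$ with bounded distortion of Birkhoff sums (the standard \hol estimate $|S_n\vphi(x) - S_n\vphi(y)| \le C$ for $x,y \in [\I]$) yields the Gibbs bounds
$$C^{-1} \le \frac{\mu([\I])}{e^{-n\P(\vphi)}e^{S_n\vphi(x)}} \le C.$$

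Finally I would upgrade the Gibbs property to the variational characterization. One direction is a direct separated-set estimate: the Gibbs bounds on cylinders force $h_\mu(\s) + \int\vphi\,d\mu = \P(\vphi)$, so $\mu$ is an equilibrium state. For uniqueness, suppose $\mu'$ is any equilibrium state; using the Gibbs property of $\mu$ one shows $\mu'$ is absolutely continuous with respect to $\mu$ with bounded density, hence $\mu' \ll \mu$ and by ergodicity of $\mu$ (which follows from the mixing/spectral-gap statement, as $\mu$ is actually mixing) we get $\mu' = \mu$. The main obstacle is the Ruelle--Perron--Frobenius step itself — establishing the spectral gap and the strict positivity of the leading eigenfunction — but this is entirely classical for \hol potentials over mixing subshifts of finite type, so I would simply cite \cite{bowen1974some} (or \cite{walters2000introduction}) rather than reproduce the cone contraction in detail.
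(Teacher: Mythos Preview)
Your outline is a correct sketch of the classical Ruelle--Perron--Frobenius argument, but there is nothing in the paper to compare it against: Proposition~\ref{prop: bowen} is stated as a cited result from \cite{bowen1974some} and the paper provides no proof of its own. The authors simply invoke Bowen's theorem as a black box from the literature, so your proposal goes well beyond what the paper does. If anything, your final sentence---that one would ``simply cite \cite{bowen1974some} (or \cite{walters2000introduction}) rather than reproduce the cone contraction in detail''---is precisely the approach the paper takes, applied to the entire proposition rather than just the RPF step.
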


Although we have stated Proposition \ref{prop: bowen} relevant to our setting of subshift of finite types, Bowen \cite{bowen1974some} established sufficient conditions that guarantee the existence of a unique equilibrium in more general settings, and such conditions have been generalized in many different directions since then. In this paper, we focus on its generalization to subadditive potentials. 

A sequence of non-negative and continuous function $\{\vphi_n\}_{n  \in \N}$ on $X$ is \textit{submultiplicative} if for any $m,n \in \N$, 
$$0 \leq \vphi_{n+m} \leq \vphi_n \cdot \vphi_m\circ f^n.$$
From the submultiplicativity, $\Phi:=\{\log \vphi_n\}_{n \in \N}$ becomes a \textit{subadditive potential} on $X$. 

Given a potential $\vphi \in C(X)$, the Birkhoff sum $\{S_n\vphi\}_{n\in \N}$ is an additive sequence of functions on $X$. In a similar analogy, given a subadditive potential $\Phi=\{\log \vphi_n\}_{n \in\N}$, we may consider the $n$-th function $\log\vphi_n$ of $\Phi$ as a generalization of the $n$-th Birkoff sum $S_n\vphi$ of some potential $\vphi$. By generalizing the definition of the pressure, Cao, Feng, and Huang \cite{cao2008thermodynamic} define the \textit{subadditive pressure} $\P(\Phi)$ of $\Phi$ by 
$$
\P(\Phi) := \lim\limits_{\ep\to 0}\limsup\limits_{n \to \infty}\frac{1}{n} \log  \sup \Big\{\sum\limits_{x \in E} \vphi_n(x) \colon E \text{ is a }(n,\ep)\text{-separated subset of }X\Big\}.
$$

As noted in the introduction, \cite{cao2008thermodynamic} also established that $\P(\Phi)$ satisfies the subadditive variational principle:
$$
\P(\Phi) = \sup \{h_\mu(f) +\F(\Phi,\mu) \colon \mu \in \M(f),~ \F(\Phi,\mu) \neq -\infty\},
$$
where
$$
\F(\Phi,\mu) := \lim\limits_{n\to\infty} \int\frac{1}{n}\log\vphi_n(x)~ d\mu(x).
$$
As in the additive setting, any $\mu\in \M(f)$ attaining the supremum in the subadditive variational principle is called an \textit{equilibrium state} of $\Phi$.

We remark that Barreira \cite{barreira1996non} introduced an alternative way to define a subadditive pressure using open covers. It is not known whether Barreira's definition of the subadditve pressure coincides with Cao, Feng, and Huang's above definition in the most general setting. However, it is shown in \cite{cao2008thermodynamic} that two definitions coincide when the base system is entropy-expansive which includes our setting of subshifts of finite type $(\Sig,\s)$.

In this paper, we will focus on subadditive potentials that arise as norm potentials of $\gltwo$-cocycles over $\Sig$. 
A continuous cocycle $\A \in C(\Sig,\gltwo$) gives rise to the \emph{norm potential} $$\Phi_\A:=\{\log \vphi_{\A,n}\}_{n\in \N}, \text{ where }\vphi_{\A,n}(x)=\|\A^n(x)\|.$$
Because our cocycle $\A$ takes values in $\gltwo$, the Lyapunov exponent $\lambda_+(\A,\mu) = \F(\Phi_\A,\mu)$ is never equal to $-\infty$ for any invariant measure $\mu \in \M(\s)$; this is reflected in the formulation of the subadditive variational principle \eqref{eq: subadditive var prin} in the introduction. Moreover, the subadditive variation principle still holds when the supremum is taken over all ergodic measures instead.

We note that Proposition \ref{prop: bowen} does not readily extend to subadditive potentials. Even restricted to norm potentials $\Phi_\A$ of locally constant cocycles $\A$, there are examples where $\Phi_\A$ admits multiple equilibrium states; see \cite{feng2010equilibrium}. 
Moreover, while fiber-bunched cocycles are nearly conformal, the properties of their norm potentials differ from those of conformal cocycles. The norm potentials of conformal cocycles are additive, and can be studied via tools from classical thermodynamic formalism such as Proposition \ref{prop: bowen}. On the other hand, due to subadditivity, Proposition \ref{prop: bowen} does not necessarily hold for norm potentials of fiber-bunched cocycles without extra assumptions; see Subsection \ref{subsec: qm}.

However, restricted to norm potentials of cocycles in $\HH$, Proposition \ref{prop: bowen} remains valid for norm potentials of large subset of $\HH$. In particular, such subset includes the set of all weakly typical cocycles $\U_w$. See Section \ref{sec: thm A} for more details on the statements in this paragraph.

\begin{defn}
Two continuous functions $\vphi,\psi \in C(\Sig)$ with $P(\vphi) =P(\psi)$ are \textit{cohomologous} if there exists a continuous function $h$ such that $\vphi - \psi = h\circ \s -h$, and we denote it by $\vphi \sim \psi$.
\end{defn}

It is clear from the variational principle \eqref{eq: var prin} that if $\vphi \sim \psi$, then their set of equilibrium states are the same. Since the base dynamic $(\Sig,\s)$ is uniformly hyperbolic, restricted to the class of \hol potentials, this is an if and only if statement; that is, two \hol potentials $\vphi$ and $\psi$ are cohomologous if and only if their unique equilibrium states coincide \cite[Theorem 1.28]{bowen1975ergodic}.

Similarly, if a cocycle $\A \in C(\Sig,\text{GL}_d(\R))$ is continuously conjugated to another cocycle $\B \in C(\Sig,\text{GL}_d(\R))$, then from the subadditive variational principle \eqref{eq: subadditive var prin} the pressures and the set of equilibrium states for the norm potentials $\Phi_\A$ and $\Phi_\B$ are the same. This follows because $\F(\Phi_\A,\mu) = \F(\Phi_\B,\mu)$ for all $\mu \in \M(\s)$ as the norm $\|\CC(x)\|$ of the continuous conjugacy $\CC \colon \Sig \to \gltwo$ is uniformly bounded from the compactness of $\Sig$.
However, it is not necessarily true that two cocycles $\A$ and $\B$ are conjugated to each other just because their equilibrium states coincide.

\section{Proof of Theorem \ref{thm: C}}\label{sec: thm C}
In this section, we will show that the norm potential $\Phi_\A$ of a reducible cocycle $\A \in \HH$ has a unique equilibrium state unless the conjugated cocycle $\B$ as in \eqref{eq: B} satisfies two conditions from Theorem \ref{thm: C}, in which case there are two ergodic equilibrium states for $\Phi_\A$.

For reducible cocycles, we treat them by modifying the results from \cite{feng2010equilibrium}. For locally constant cocycles, Feng and K\"{a}enm\"{a}ki \cite{feng2010equilibrium} showed that after simultaneously conjugating the cocycle into upper block triangle matrices of the same indices such that the tuples of diagonal blocks are irreducible, the number of ergodic equilibrium states for the norm potentials cannot exceed the number of the diagonal blocks. Since the norm potentials of cocycles in $\HH$ have bounded distortion property \eqref{eq: bdd distortion}, we may modify and apply the result of \cite{feng2010equilibrium}.

Denoting by $\mathcal{E}(\s) \subseteq \mathcal{M}(\s)$ the set of ergodic $\s$-invariant probability measures, the following proposition states that the largest Lyapunov exponent of any $\mu \in \mathcal{E}(\s)$ and the pressure of a $\gltwo$-cocycle taking values in upper triangular matrices are coming from the diagonal entries.

\begin{prop}\label{prop: Lyap exp}
Suppose $\B \in C(\Sig,\gltwo)$ is of the form \eqref{eq: B}:
$$\B(x) = \begin{pmatrix}
a(x) & b(x) \\
0 & c(x)
\end{pmatrix}.
$$
Then for any ergodic probability measure $\mu \in \mathcal{E}(\s)$,
\begin{enumerate}[label=(\roman*)]
\item the Lyapunov exponent $\lambda_+(\B,\mu)$ satisfies
$$\lambda_+(\B,\mu) = \max \left\{ \int \log |a| ~d\mu, \int \log|c|~d\mu \right\}.$$
\item $\P(\Phi_\B) = \max \Big\{ \P(\log|a|),\P(\log|c|)\Big\}$.
\end{enumerate}
\end{prop}

In order to prove Proposition \ref{prop: Lyap exp}, we need a lemma from ergodic theory.
\begin{lem}\label{lem: ergodic theory}
Let $(X,\B,\mu)$ be a probability space, $f \colon X \to X$ be an ergodic measure-preserving transformation, and $\vphi \colon X \to \R$ be a $\mu$-integrable function with $\sup\limits_{x \in X} |\vphi(x)|<\infty$. Denoting $\displaystyle \alpha:= \int \vphi ~d\mu$, for any $\ep>0$ and $\mu$-almost every $x\in X$, there exists $n_1 = n_1(x) \in \N$ such that
$$\left|S_n\vphi(f^mx) - n \alpha\right| \leq (n+m) \ep$$ 
for any $n \geq n_1$ and any $m \in \N$.
\end{lem}
\begin{proof}
Let $X_0\subset X$ be a full measure subset from Birkhoff Ergodic Theorem such that the Birkhoff average $\displaystyle\frac{1}{n}S_n\vphi(x)$ converges to $\alpha$ for any $x \in X_0$. 
For each $x \in X_0$, choose $n_0=n_0(x) \in \N$ such that
$$\left| \frac{1}{n} S_n\vphi(x) -\alpha \right| < \ep/2$$
for each $n \geq n_0$. Denoting $a_n:=S_n\vphi(x) - n\alpha$ for each $n\in \N$, define $n_1 =  n_1(x) \geq n_0$ such that
$$n_1 \geq 2/\ep \cdot \left(\max\limits_{1 \leq i \leq n_0-1} |a_i|\right).$$

Consider any $n \geq n_1$ and $m \in \N$. If $m \geq n_0$, then 
\begin{align*}
|S_n\vphi(f^mx) - n\alpha |&= |\left(S_{n+m}\vphi(x) - (n+m)\alpha \right)- \left(S_m\vphi(x)-m\alpha\right)|\\
&\leq (n+2m)\ep/2\\
&\leq (n+m)\ep.
\end{align*}
If $m \leq n_0-1$, then
\begin{align*}
|S_n\vphi(f^mx) - n\alpha |&= |\left(S_{n+m}\vphi(x) - (n+m)\alpha \right)- \left(S_m\vphi(x)-m\alpha\right)|\\
&\leq (n+m)\ep/2+|a_m|\\
&\leq (n+m)\ep
\end{align*}
where the last inequality follows because $n\cdot\ep/2 \geq n_1\cdot \ep/2 \geq |a_m|$.
\end{proof}

\begin{cor}\label{cor: C(x)}
Under the same assumptions of Lemma \ref{lem: ergodic theory}, let $\displaystyle C_0:=\sup\limits_{x \in X} |\vphi(x)|<\infty$. Then for any $\ep>0$ and for $\mu$-almost every $x \in X$, there exists $C(x)>0$ such that
$$|S_n\vphi(f^mx) -n\alpha|<C(x)+(n+m)\ep$$
for all $n,m \in \N$.
\end{cor}
\begin{proof}
In view of Lemma \ref{lem: ergodic theory}, it suffices to set $C(x) = (C_0+|\alpha|)(n_1(x)-1)$ for each $x \in X_0$.
\end{proof}

\begin{proof}[Proof of Proposition \ref{prop: Lyap exp}]
By considering $a(x)$ and $c(x)$ as multiplicative cocycles over $\Sig$, let
$\tau^n(x) := \prod\limits_{i=0}^{n-1}\tau(f^ix)$
for $\tau = \{a,c\}$. Then for any $n \in \N$, we have
$$\B^n(x) = \begin{pmatrix}
a^n(x) & \sum\limits_{i=0}^{n-1} a^{n-i-1}(\s^{i+1}x)b(\s^i x)c^i(x) \\
0 & c^n(x)
\end{pmatrix}.
$$
Denoting the $(i,j)$-entry of a matrix $A$ by $A_{i,j}$, we have
\begin{equation}\label{eq: norm B^nx}
\max\Big\{|\B^n(x)_{1,1}|,|\B^n(x)_{2,2}|\Big\}\leq  \|\B^n(x)\| \leq 2^2\max\limits_{1 \leq i,j\leq 2}|\B^n(x)_{i,j}|.
\end{equation}
Here $\B^n(x)_{1,1} = a^n(x)$ and $\B^n(x)_{2,2} = c^n(x)$.

For any $\ep>0$, Corollary \ref{cor: C(x)} applied to each $\vphi(x) =\log |a(x)|$ and $\vphi(x)  = \log |c(x)|$ gives $C(x)>0$ for $\mu$-almost every $x \in \Sig$ such that
$$|a^{n-i-1}(\s^{i+1}x)| \leq \exp\Big(C(x)+(n-i-1)\int \log |a|~d\mu+n\ep\Big)$$
and
$$|c^{i}(x)| \leq \exp\Big( C(x)+i \int \log |c|~d\mu+i\ep \Big).$$
Denoting $L:= \max\limits_{x \in \Sig} |b(x)|$, we have
\begin{align*}
|\B^n(x)_{1,2}| &= \Big|\sum\limits_{i=0}^{n-1} a^{n-i-1}(f^{i+1}x)b(f^kx)c^i(x) \Big| ,\\
&\leq \sum_{i=0}^{n-1} L\exp\Big(2C(x)+(n-i-1)\int \log |a|~d\mu+i\int \log |c|~d\mu+(n+i)\ep\Big),\\
&\leq n L\exp\Big( 2C(x)+n\max\Big\{ \int \log |a|~d\mu,\int \log |c|~d\mu\Big\} +2n\ep\Big).
\end{align*}
Since $\ep>0$ was arbitrary, it follows from \eqref{eq: norm B^nx} that for $\mu$-a.e. $x\in \Sig$, we have
$$\lambda_+(\B,\mu) = \lim\limits_{n\to\infty} \frac{1}{n} \log \|\B^n(x)\| = \max\Big\{ \int \log |a|~d\mu,\int \log |c|~d\mu\Big\},$$
establishing the first statement of the proposition.

From the first statement and the subadditive variational principle \eqref{eq: subadditive var prin}, the second statement also follows. 
Indeed, let $\{\mu_n\}_{n \in \N}$ be a sequence of measures in $\E(\s)$ such that $h_{\mu_n}(\s)+\lambda_+(\B,\mu_n)$ limits to $\P(\Phi_\B)$. By comparing $\displaystyle \int \log|a|~d\mu_n$ to $\displaystyle \int \log|c|~d\mu_n$ for each $n \in \N$, without loss of generality, we may assume that there exists $n_k \to \infty$ such that $\displaystyle \int \log|a|~d\mu_{n_k} \geq \displaystyle\int \log|c|~d\mu_{n_k}$ for each $k \in \N$. Then from the first statement and the variational priniciple \eqref{eq: var prin}, we have
$$h_{\mu_{n_k}}(\s) + \lambda_+(\B,\mu_{n_k})=h_{\mu_{n_k}}(\s)+\int \log|a|~d\mu_{n_k}\leq \P(\log|a|).$$
From the choice of $\mu_n$, the left hand side limits to $\P(\Phi_\B)$ as $k \to \infty$ and this proves $\P(\Phi_\B) \leq  \max \Big\{ \P(\log|a|),\P(\log|c|)\Big\}$. Conversely, applying similar arguments to $\log|a|$ (i.e., by choosing a sequence $\mu_n \in \E(\s)$ such that $\displaystyle h_{\mu_n}(\s) + \int \log|a|~d\mu_{n}$ limits to $\P(\log|a|)$ and making use of the first statement and the subadditive variational principle \eqref{eq: subadditive var prin}) and $\log|c|$ establishes the reverse inequality.
\end{proof}

If we further suppose that $\B$ from Proposition \ref{prop: Lyap exp} is \hol continuous, then each $\log|a|$ and $\log|c|$ is a \hol potential over a mixing hyperbolic system $(\Sig,\s)$ and has a unique equilibrium state from Proposition \ref{prop: bowen}. Moreover, $\mu_{\log|a|}$ is equal to $\mu_{\log|c|}$ if and only if $\log|a|$ and $\log|c|$ are cohomologous. 
Hence, the following corollary is a consequence of Proposition \ref{prop: Lyap exp}. Also, it is clear that this corollary implies Theorem \ref{thm: C}.

\begin{cor}\label{cor: alternatives}
Suppose $\B \in C^\b(\Sig,\gltwo)$ is of the form \eqref{eq: B}. Then the following holds:
\begin{enumerate}
\item If $\P(\log|a|) \neq \P(\log|c|)$, then $\log|a| \not\sim \log|c|$ and $\Phi_\A$ has a unique equilibrium state. 
\item If $\log|a| \sim \log|c|$, then $\P(\log|a|) = \P(\log|c|)$ and $\Phi_\A$ has a unique equilibrium state $\mu_{\log|a|}=\mu_{\log|c|}$. 

\item If $\log|a| \not\sim \log|c|$ and $\P(\log|a|) = \P(\log|c|)$, then $\Phi_\A$ has exactly two distinct ergodic equilibrium states $\mu_{\log|a|}$ and $\mu_{\log|c|}$.
\end{enumerate}
\end{cor}

\begin{rem}
The third alternative from Corollary \ref{cor: alternatives} is not a vacuous option in that there are cocycles $\B$ satisfying such conditions. For instance, take any two positive \hol continuous functions $\log|a|,~\log|c| \in C^\beta(\Sig,\R^+)$ such that there exist two periodic points $p,q \in \Sig$ of some periods $n,m\in \N$ such that the Birkhoff sum $(S_n\log|a|)(p)$ equals $(S_n\log|c|)(p)$ while $(S_m\log|a|)(q)$ differs from $ (S_m\log|c|)(q)$. The assumption on the Birkhoff sums along the orbit of $q$ ensures that $\log|a|$ is not cohomologous to $\log|c|$. 

If $\P(\log|a|) = \P(\log|c|)$, then by setting $b \equiv 0$, the cocycle $\B$ satisfies the conditions from the third alternative of Corollary \ref{cor: alternatives}. If not, then suppose $\P(\log|a|)> \P(\log|c|)$ without loss of generality. Since $\log|c|$ is a positive function, from the variational principle \eqref{eq: var prin}, $\P(s\log|c|)$ limits to $\infty$ as $s \to \infty$. So there exists $s_0 > 1$ such that $\P(\log|a|) = \P(s_0\log|c|)$, and the assumption on the Birkoff sums along the orbit of $p$ ensures that $\log|a|$ is not cohomologous to $s_0\log|c|$. Then setting $b\equiv 0$ again and replacing the function $\log|c|$ by $s_0\log|c|$, the cocycle $\B$ satisfies the conditions from the third alternative from Corollary \ref{cor: alternatives}. 

We may also choose such functions so that $\B$ is fiber-bunched as well. Indeed, start with any constant function $\log|c| \equiv k$ with $k \in \R^+$ sufficiently large compared to the entropy $h_\text{top}(\s)$ of $(\Sig,\s)$, and let $\log|a|$ be a small perturbation of $\log|c|$ obtained by slightly increasing the function in a neighborhood of some periodic orbit. If the perturbation is small enough, then $s_0$ is sufficiently close to $1$, and the resulting cocycle $\B$ will be fiber-bunched.
\end{rem}

We conclude this section with the proof of Corollary \ref{cor: reducible0}.

\begin{proof}[Proof of Corollary \ref{cor: reducible0}]
In view of Theorem \ref{thm: C}, we only need to show that the statement of Corollary \ref{cor: reducible0} is well-posed, independent of the choice of conjugacy $\CC$. Indeed if there are two \hol conjugacies $\CC_1,\CC_2 \colon \Sig \to \gltwo$ such that both cocycles $\B_i(x):=\CC_i(\s x)\A(x)\CC_i(x)^{-1}$, $i \in \{1,2\}$, take values in the group of upper triangular matrices as in \eqref{eq: B}, then direct computation shows that $\log|a_1| \sim \log|c_2|$ and $\log|a_2| \sim \log|c_1|$. Hence, two conditions from Theorem \ref{thm: C} are intrinsic conditions on the reducible cocycles, independent of the choice of the conjugacy $\CC$. 
\end{proof}	

\section{Proof of Theorem  \ref{thm: B}}\label{sec: thm B}

We now begin the proof of Theorem \ref{thm: B}. The content of Theorem \ref{thm: B} is similar to the fact that a subset of $\gltwo$ which does not preserve a common line either generates a Zariski dense subgroup, preserves a union of two lines, or belongs to a subgroup of the form $O(2) \times \R^*$ in some inner product.

Recall that $\H(p)$ is the set of all homoclinic points of $p$, and for each $z \in \H(p)$, there is an associated holonomy loop $\psi_p^z := H^s_{z,p} \circ H^u_{p,z}$.
As $(\Sig,\s)$ is a mixing hyperbolic system, $\H(p)$ is dense in $\Sig$ for any periodic point $p \in \Sig$.

\begin{lem}\label{lem: irred 1}
Let $\A \in \HH$ be an irreducible cocycle. For any fixed point $p \in \Sig$ and any line $L \in \RP^1$, either
\begin{enumerate}
\item $\A(p)(L) \neq L$, or
\item there exists a homoclinic point $z \in \H(p)$ such that $\psi_p^z (L) \neq L$.
\end{enumerate}
\end{lem}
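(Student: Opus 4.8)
The plan is to prove the contrapositive in a slightly strengthened form: assuming that \emph{both} alternatives fail, i.e. that $\A(p)(L)=L$ and $\psi_p^z(L)=L$ for every $z\in\H(p)$, I will construct a proper continuous line sub-bundle $\xi$ of $\Sig\times\R^2$ that is $\A$-invariant and invariant under both holonomies $H^s$ and $H^u$. This contradicts the irreducibility of $\A$ (Definition~\ref{defn: reducible}) and therefore forces one of the two alternatives. Throughout I use the standard fact that, since $(\Sig,\s)$ is a mixing subshift of finite type and $p$ is a fixed point, the homoclinic set $\H(p)$ is dense in $\Sig$; concretely one approximates a given point $x$ by splicing a long central block of $x$ with the constant sequence $p$ on both sides, inserting short connecting words by primitivity of $T$.

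First I would build the section on the dense set $\H(p)$. For $z\in\H(p)$ set $\xi_0(z):=H^u_{p,z}(L)$, and put $\xi_0(p):=L$. Since $\psi_p^z=H^s_{z,p}\circ H^u_{p,z}$ and $H^s_{z,p}=(H^s_{p,z})^{-1}$, the hypothesis $\psi_p^z(L)=L$ says exactly that $H^u_{p,z}(L)=H^s_{p,z}(L)$, so on $\H(p)$ the section $\xi_0$ has two equivalent descriptions, one through the unstable and one through the stable holonomy based at $p$. Next I check that $\xi_0$ is $\b$-\hol on $\H(p)$. Given $z_1,z_2\in\H(p)$, form the bracket $w:=[z_1,z_2]=\Wloc^u(z_1)\cap\Wloc^s(z_2)$; since $z_1\in\W^u(p)$ and $z_2\in\W^s(p)$ one has $w\in\W^u(p)\cap\W^s(p)=\H(p)\cup\{p\}$, with $d(z_1,w),\,d(z_2,w)\le d(z_1,z_2)$. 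The cocycle relation for $H^u$ along $p\to z_1\to w$ gives $\xi_0(w)=H^u_{z_1,w}(\xi_0(z_1))$, and that for $H^s$ along $p\to z_2\to w$ gives $\xi_0(w)=H^s_{z_2,w}(\xi_0(z_2))$ (when $w=p$ one compares $\xi_0(z_1),\xi_0(z_2)$ directly to $L$ instead). Since $w\in\Wloc^u(z_1)$ and $w\in\Wloc^s(z_2)$, condition~\ref{eq: b} bounds $\|H^u_{z_1,w}-I\|$ and $\|H^s_{z_2,w}-I\|$ by $C\,d(z_1,z_2)^\b$, so $\xi_0(z_1)$ and $\xi_0(z_2)$ both lie within $O(d(z_1,z_2)^\b)$ of $\xi_0(w)$ in $\RP^1$; hence $\xi_0$ is $\b$-\hol. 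Being uniformly continuous on a dense subset of the compact space $\Sig$, it extends uniquely to a continuous line sub-bundle $\xi\colon\Sig\to\RP^1$, which is proper since its fibers are one-dimensional in $\R^2$.

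It then remains to verify invariance. Taking sequences in $\H(p)$ converging to a given $y$, continuity of $\xi$ and of the holonomies yields the closed forms $\xi(y)=H^s_{p,y}(L)$ for $y\in\W^s(p)$ and $\xi(y)=H^u_{p,y}(L)$ for $y\in\W^u(p)$. The global holonomy equivariance from Definition~\ref{defn: local holonomy}(2) reads $\A(y)\circ H^s_{p,y}=H^s_{p,\s y}\circ\A(p)$ (using $\s p=p$); combined with $\A(p)(L)=L$ this gives $\A(y)\,\xi(y)=\xi(\s y)$ for every $y\in\W^s(p)$, so $\A$-invariance holds on all of $\Sig$ by density and continuity. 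For stable-holonomy invariance, the cocycle identity $H^s_{x,y}\circ H^s_{p,x}=H^s_{p,y}$ shows $H^s_{x,y}\,\xi(x)=\xi(y)$ whenever $x\in\W^s(p)$ and $y\in\Wloc^s(x)$; such pairs are dense among all $(x,y)$ with $y\in\Wloc^s(x)$ — approximate $x$ by a nearby $x'\in\W^s(p)$ and replace $y$ by $[y,x']\in\Wloc^s(x')$, which tends to $y$ — so $H^s$-invariance holds everywhere by continuity. The argument for $H^u$-invariance is symmetric, using $\xi=H^u_{p,\cdot}(L)$ on the dense set $\W^u(p)$. Hence $\xi$ is a proper $\A$-invariant, bi-holonomy invariant line-bundle, contradicting irreducibility.

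The main obstacle is the \hol continuity and extension step: one must exploit that homoclinic points of $p$ carry \emph{both} a stable and an unstable holonomy expression for $\xi_0$, and that the bracket $w=[z_1,z_2]$ is simultaneously reachable from $z_1$ by a short unstable holonomy and from $z_2$ by a short stable holonomy, so that condition~\ref{eq: b} controls $d_{\RP^1}(\xi_0(z_1),\xi_0(z_2))$. Everything else — the density of $\H(p)$, the closed forms for $\xi$ on $\W^{s/u}(p)$, and the density bookkeeping for invariance — is routine once the local product structure of $(\Sig,\s)$ is invoked.
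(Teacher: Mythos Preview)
Your proof is correct and follows essentially the same route as the paper: argue the contrapositive, define the line field on $\H(p)$ via $H^{s}_{p,z}(L)=H^{u}_{p,z}(L)$, use the bracket and condition~\ref{eq: b} to get $\b$-\hol continuity, and extend by density of $\H(p)$. Your write-up is more explicit than the paper's in actually verifying $\A$-invariance and bi-holonomy invariance of the extended bundle (the paper simply asserts these), but the underlying argument is identical.
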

\begin{proof}
Suppose the conclusion of the lemma does not hold. Then there exists an $\A(p)$-invariant line $L \in \R^2$ that is preserved under $\psi_p^z$ for all homoclinic points $z \in \H(p)$. For each homoclinic point $z \in \H(p)$, we define
$$L_z := H^s_{p,z} (L)= H^u_{p,z}(L).$$
The second equality holds because $L$ is invariant under $\psi_p^z$. 

We will show that such extension of $L$ to $\H(p)$ is \hol continuous. Suppose $x,y \in \H(p)$ with $d(x,y)$ small. Setting $z:=[y,x]$, $z$ is also a homoclinic point of $p$. Then $H^{s}_{x,z}$ maps $L_x$ to $L_z$:
$$L_z = H^s_{p,z}(L) = H^s_{x,z} \circ H^s_{p,x}(L) = H^s_{x,z}(L_x).$$
Similarly, $L_z = H^u_{y,z}(L_y)$.
Hence, $$L_y  = H^u_{z,y}\circ H^s_{x,z}(L_x).$$
Since $H^{s/u}_{x,y}$ varies $\b$-\hol continuously in $x$ and $y$ from condition \ref{eq: b}, there exists $C>0$ depending only on $\A$ such that
$$\rho(L_x,L_y) \leq C d(x,y)^\b,$$
where $\rho$ is the angular distance on $\RP^1$.
 
Since $\H(p)$ is dense in $\Sig$, it follows that $L$ can be uniquely extended to an $\A$-invariant and $H^{s/u}$-invariant line bundle over $\Sig$, contradicting the irreducibility assumption on $\A$.
\end{proof}

The following corollary is an immediate consequence of Lemma \ref{lem: irred 1}.
\begin{cor}\label{cor: irred 1} Let $\A \in \HH$ be an irreducible cocycle, $p \in \Sig$ be a fixed point, and $L \in \RP^1$ be an eigendirection of $\A(p)$. Then there exists $z \in \H(p)$ such that $\psi_p^z (L)\neq L$.
\end{cor}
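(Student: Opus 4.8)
The plan is to reduce the corollary to the fixed‑point case treated in Lemma~\ref{lem: irred 1} by passing to a power of $\s$. Write $n=\text{per}(p)$, so that $p$ is a fixed point of $\s^n$, and regard $\mathcal B:=\A^n$ as a cocycle over $(\Sig,\s^n)$. All of the data in the statement are insensitive to this passage: the local and global stable and unstable sets are defined through the metric on $\Sig$ alone, so $\W^{s/u}(p)$, and hence $\H(p)$, are the same for $\s$ and for $\s^n$, and $\H(p)$ remains dense in $\Sig$; the canonical holonomies of $\mathcal B$ are the limits $\lim_{k\to\pm\infty}\A^{nk}(y)^{-1}\A^{nk}(x)$, which are subsequences of the limits \eqref{eq: canonical hol} defining $H^{s/u}$ for $\A$ and hence converge to the same matrices. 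Consequently $\mathcal B$ lies in the class $\HH$ over $(\Sig,\s^n)$ (with the same \hol exponent for its holonomies, by \ref{eq: b}), and the holonomy loops $\psi^z_p$ built from $\mathcal B$ agree with those built from $\A$.

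Granting that $\mathcal B$ is again irreducible, I would then apply Lemma~\ref{lem: irred 1} to $\mathcal B$ at the fixed point $p$ and the line $L\in\RP^1$. Since $L$ is an eigendirection of $\A^{\text{per}(p)}(p)=\mathcal B(p)$, we have $\mathcal B(p)(L)=L$, so alternative~(1) of the lemma is excluded; hence alternative~(2) holds, producing $z\in\H(p)$ with $\psi^z_p(L)\neq L$. This is precisely the conclusion of the corollary.

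The one point that is not merely formal — and the step I expect to be the main obstacle — is that irreducibility of $\A$ over $(\Sig,\s)$ must be upgraded to irreducibility of $\mathcal B=\A^n$ over $(\Sig,\s^n)$. Suppose $\mathcal B$ were reducible, with a proper $\mathcal B$-invariant, bi‑holonomy‑invariant (hence $\b$-\hol, by \ref{eq: b}) line sub‑bundle $\xi$ of $\Sig\times\R^2$. Setting $(\A_*\eta)_x:=\A(\s^{-1}x)\eta_{\s^{-1}x}$, the bundles $\xi,\A_*\xi,\dots,\A_*^{n-1}\xi$ are again bi‑holonomy invariant — this uses only the equivariance $H^s_{\s x,\s y}=\A(y)H^s_{x,y}\A(x)^{-1}$ for global holonomies and its unstable analogue — are $\b$-\hol, and are cyclically permuted by $\A_*$ with $\A_*^n\xi=\xi$. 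If they all coincide, $\xi$ is $\A$-invariant, contradicting irreducibility of $\A$. The remaining possibility is that they form a nontrivial $\A$-orbit; there two distinct members are forced to be disjoint at every point of $\H(p)$, by the uniqueness of the bi‑holonomy‑invariant extension of a line through a homoclinic point (exactly as in the proof of Lemma~\ref{lem: irred 1}), so the orbit determines an $\A$-invariant, holonomy‑invariant finite subset of $\RP^1$ in each fibre. Excluding this possibility for irreducible $\A$ — i.e.\ extracting from such a finite invariant family a genuine $\A$-invariant line sub‑bundle, or otherwise deriving a contradiction — is the crux, and is where the argument has to be carried out with care.
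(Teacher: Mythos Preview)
Your reduction to the fixed-point case of Lemma~\ref{lem: irred 1} via the cocycle $\A^n$ over $(\Sig,\s^n)$ is exactly the route the paper has in mind (it offers no argument beyond calling the corollary ``an immediate consequence'' of that lemma), and your observations that the canonical holonomies, the loops $\psi_p^z$, and the set $\H(p)$ are unchanged under this passage are all correct. You also rightly isolate the only substantive step: irreducibility of $\A$ must force irreducibility of $\A^n$.

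That step, however, cannot be salvaged. Over the full $2$-shift take the one-step cocycle with
\[
A_1=\begin{pmatrix}0&1\\1&0\end{pmatrix},\qquad A_2=\begin{pmatrix}0&2\\[2pt]1/2&0\end{pmatrix}.
\]
These antidiagonal involutions share no eigendirection, so $\A$ is irreducible in the sense of Definition~\ref{defn: reducible}; yet every product of an even number of them is diagonal, so the constant bundle $x\mapsto e_1$ is $\A^2$-invariant and bi-holonomy invariant, and $\A^2$ is reducible. Worse, at the period-$2$ point $p$ with $\A^2(p)=A_2A_1=\mathrm{diag}(2,1/2)$ and eigendirection $L=e_1$, every holonomy loop $\psi_p^z$ is such an even product (for a one-step cocycle each of $H^u_{p,z}$ and $H^s_{z,p}$ has the form $\A^k(\cdot)\,\A^k(\cdot)^{-1}$), hence is diagonal and fixes $e_1$. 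Thus no $z\in\H(p)$ twists $L$, and the corollary itself fails at this $p$. The obstruction you anticipated in your last paragraph---a nontrivial finite $\A$-orbit of holonomy-invariant line bundles, here $\{e_1,e_2\}$ swapped by $\A$---is exactly what occurs, and it cannot be excluded without a stronger hypothesis (e.g.\ strong irreducibility, or restricting to fixed points as in Lemma~\ref{lem: irred 1}).
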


For any $\A \in C(\Sig,\glr)$ and $\mu \in \mathcal{E}(\s)$, Kingman's subadditive ergodic theorem ensures that the top Lyapunov exponent $\lambda_+(\A,\mu)$ of $\A$ with respect to $\mu$ satisfies
$$
\lambda_+(\A,\mu) = \lim\limits_{n \to \infty} \frac{1}{n}\log\|\A^n(x)\| \text{ for }\mu \text{ a.e. }x \in \Sig.
$$
Indeed, this may be taken as the definition of $\lambda_+(\A,\mu)$.
Similarly, the smallest Lyapunov exponent of $\A$ with respect to $\mu$ may be defined as
$$\lambda_-(\A,\mu):= \lim\limits_{n \to \infty} \frac{1}{n} \log \|\A^n(x)^{-1}\|^{-1} \text{ for }\mu \text{ a.e. }x \in \Sig.$$

We say $\lambda_\pm (\A,\mu)$ are the \textit{extremal Lyapunov exponents} of $\A$ with respect to $\mu$.
For any periodic point $p \in \Sig$, we denote by $\lambda_{\pm}(\A,p)$ the extremal Lyapunov exponents of the invariant measure $\mu_p$ supported on the orbit of $p$.

The following proposition from Kalinin and Sadovskaya \cite{kalinin2010linear} produces an $\A$-invariant conformal (not necessarily non-trivial) sub-bundle when the extremal Lyapunov exponents of $\A$ coincide for all periodic points.

\begin{prop}\cite[Proposition 2.1, 2.7]{kalinin2010linear}\label{prop: conformal subbundle} 
Let $f$ be a transitive $C^{2}$ Anosov diffeomorphism on a compact manifold $M$, $\mathcal{E}$ a finite-dimensional vector bundle over $M$, and $\A\colon \mathcal{E} \to \mathcal{E}$ an $\alpha$-\hol linear cocycle. Suppose for every periodic point $p \in M$, the invariant measure $\mu_p \in \mathcal{E}(\s)$ satisfies 
\begin{equation}\label{eq: equal exponents at periodic}
\lambda_+(\A,p) = \lambda_-(\A,p).
\end{equation}
Then either $\A$ preserves an $\alpha$-\hol continuous conformal structure on $\mathcal{E}$ or $\A$ preserves an $\alpha$-\hol continuous proper non-trivial sub-bundle $\mathcal{E}' \subset \mathcal{E}$ and an $\alpha$-\hol continuous conformal structure on $\mathcal{E}'$. 	 
\end{prop}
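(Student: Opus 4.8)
The plan is to first upgrade the hypothesis on periodic orbits to all invariant measures, and then to argue by a dichotomy on the conformal distortion $K_n(x):=\|\A^n(x)\|\cdot\|\A^n(x)^{-1}\|$. First I would invoke the periodic approximation theorem for Lyapunov exponents of $\alpha$-\hol cocycles over hyperbolic systems: for every ergodic $f$-invariant $\mu$ there exist periodic points $p_k$ with $\mu_{p_k}\to\mu$ weakly and $\lambda_\pm(\A,p_k)\to\lambda_\pm(\A,\mu)$. Combined with \eqref{eq: equal exponents at periodic}, this forces $\lambda_+(\A,\mu)=\lambda_-(\A,\mu)$ for every ergodic $\mu$, hence all Lyapunov exponents of $\A$ coincide along every ergodic measure. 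Replacing $\A(x)$ by $|\det\A(x)|^{-1/\dim\E}\A(x)$ — which alters neither the $\A$-invariant sub-bundles nor the invariant conformal structures, and preserves $\alpha$-\hol continuity and the holonomies up to a scalar — we may assume that all Lyapunov exponents of $\A$ vanish along every ergodic measure; in particular $\tfrac1n\log\sup_x\|\A^n(x)\|\to0$.

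\emph{Case A} (uniformly quasiconformal, $\sup_{n\ge1,\,x\in M}K_n(x)<\infty$): the $\A$-orbit of the standard conformal structure then stays in a bounded region of the space of conformal structures on $\E$, a complete nonpositively curved symmetric space, and a standard averaging (a barycenter of that orbit, or a weak-$*$ Ces\`aro limit) produces a bounded measurable $\A$-invariant conformal structure $\tau$. I would then promote $\tau$ to an $\alpha$-\hol field by a Liv\v{s}ic-type regularity argument: compare $\tau$ along local stable and unstable sets using the $\alpha$-\hol stable and unstable holonomies of $\A$, and combine continuity of $\tau$ on a full-measure set with the local product structure of the hyperbolic base to conclude that $\tau$ agrees almost everywhere with an $\alpha$-\hol conformal structure. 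This is the first alternative, with $\E'=\E$.

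\emph{Case B} ($\sup_{n,x}K_n(x)=\infty$): the goal is now a proper nontrivial $\A$-invariant $\alpha$-\hol sub-bundle $\E'\subsetneq\E$. Granting this, $\A|_{\E'}$ still has all Lyapunov exponents zero, so one either applies Case A to $\A|_{\E'}$ or repeats the extraction; the induction on $\dim\E$ terminates at worst on a line sub-bundle, where a conformal structure is automatic, so in either case we land in the second alternative. To produce $\E'$: unboundedness of $K_n$ together with the Anosov closing lemma yields a periodic point $p$ whose return matrix $\A^{\text{per}(p)}(p)$ is not semisimple — a semisimple matrix all of whose eigenvalues have equal modulus is conjugate to a conformal linear map, and if this held at every periodic point then the same Liv\v{s}ic-type rigidity would force $K_n$ to be globally bounded, contradicting Case B — so $\A^{\text{per}(p)}(p)$ has a proper invariant subspace. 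That subspace is then spread holonomy-equivariantly over the dense set of homoclinic points of $p$ and shown to extend $\alpha$-\hol continuously to all of $M$, exactly as in the proof of Lemma \ref{lem: irred 1}.

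The decisive step, and the main obstacle, is Case B: proving that failure of uniform quasiconformality is already detected by a non-semisimple periodic return matrix, and then realizing the resulting invariant subspace as a genuinely $\alpha$-\hol sub-bundle. Both this and the regularity upgrade in Case A rest on Liv\v{s}ic-type rigidity for $\alpha$-\hol cocycles over hyperbolic systems — the principle that an almost-everywhere-invariant measurable structure built from such a cocycle agrees with an $\alpha$-\hol one — which is the genuine technical content behind the proposition.
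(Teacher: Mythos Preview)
The paper does not supply its own proof of this proposition: it is quoted verbatim as \cite[Proposition 2.1, 2.7]{kalinin2010linear} and used as a black box in the proof of Theorem~\ref{thm: B}. There is therefore no in-paper argument to compare your proposal against.

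That said, your outline is broadly in the spirit of the Kalinin--Sadovskaya argument --- periodic approximation to equalize exponents over all measures, normalization to zero exponents, then a dichotomy on uniform quasiconformality --- and Case~A is essentially correct as a sketch. The genuine gap is in Case~B. Your key step there is: ``if every periodic return matrix were semisimple with equal-modulus eigenvalues, then Liv\v{s}ic-type rigidity would force $K_n$ to be globally bounded.'' This is not a consequence of Liv\v{s}ic regularity or of the Anosov closing lemma as you suggest. A semisimple matrix with equal-modulus eigenvalues is \emph{conjugate} to a conformal map, but the conjugacy can have arbitrarily large condition number; so even if the closing lemma hands you a nearby periodic orbit, its return matrix being semisimple with equal-modulus eigenvalues does not bound $K$ at that periodic point, let alone along the original orbit segment. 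Deducing global uniform quasiconformality from the periodic data is precisely the hard content of the proposition, so invoking it here is circular. In the actual Kalinin--Sadovskaya proof, the invariant sub-bundle in the non-uniformly-quasiconformal case is produced not by locating a single non-semisimple periodic point but by a measurable reduction (amenable reduction / Oseledets-type flag) followed by a holonomy-invariance and regularity upgrade; the existence of a non-semisimple periodic point is a consequence, not the mechanism.
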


Although it is not formulated in the statement of Proposition \ref{prop: conformal subbundle}, the assumption \eqref{eq: equal exponents at periodic} has other consequences as well. First of all, it implies that the canonical holonomies $H^{s/u}$ for $\A$ converge and are as regular as the cocycle $\A$ (see the proof of Corollary 3.6 in \cite{kalinin2013cocycles}). Moreover, the sub-bundle $\E'$ from Proposition \ref{prop: conformal subbundle} is $H^{s/u}$-invariant.

For fiber-bunched cocycles, the following proposition from Bochi and Garibaldi \cite{bochi2019extremal} shows that the converse also holds:
\begin{prop}\cite[Corollary 3.5]{bochi2019extremal}\label{prop: bochi-garibaldi}
Let $\A$ be an $\alpha$-\hol fiber-bunched cocycle of a vector bundle $\mathcal{E}$ over a hyperbolic homeomorphism. An $\A$-invariant sub-bundle $\mathcal{F} \subset \mathcal{E}$ is $\alpha$-\hol if and only if it is $H^{s/u}$-invariant.
\end{prop}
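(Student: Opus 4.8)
The plan is to prove the two implications separately: the implication that $H^{s/u}$-invariance implies $\alpha$-\hol continuity is the routine holonomy-regularity argument, while the converse is the substantive direction, where the strict fiber-bunching inequality does the real work. Throughout, $\F$ is assumed $\A$-invariant, and I will work with the local holonomies $H^{s/u}_{x,y}$ for $y\in\Wloc^{s/u}(x)$.

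For the first implication, I would use the fact recalled in Section \ref{sec: 2} that fiber-bunching guarantees the canonical holonomies exist and satisfy $\|H^{s/u}_{x,y}-I\|\le C\,d(x,y)^\alpha$. Given $x,y$ with $d(x,y)$ small, form the bracket $z:=[x,y]$ from \eqref{eq: local product}, so that $z\in\Wloc^u(x)\cap\Wloc^s(y)$ and $d(x,z),d(z,y)\le d(x,y)$. Using $H^u$-invariance then $H^s$-invariance of $\F$,
$$\F(y)=H^s_{z,y}\bigl(\F(z)\bigr)=\bigl(H^s_{z,y}\circ H^u_{x,z}\bigr)\bigl(\F(x)\bigr),$$
and since $H^s_{z,y}$ and $H^u_{x,z}$ are within $O(d(x,y)^\alpha)$ of the identity with uniformly bounded norms, the linear map $H^s_{z,y}\circ H^u_{x,z}$ moves $\F(x)$ by $O(d(x,y)^\alpha)$ in the Grassmannian; hence $\F$ is $\alpha$-\hol.

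For the converse, suppose $\F$ is $\alpha$-\hol and fix $y\in\Wloc^s(x)$; I want $H^s_{x,y}(\F(x))=\F(y)$, the unstable case being identical after passing to $\s^{-1}$, whose generator is again fiber-bunched. I would use the limit formula $H^s_{x,y}=\lim_{n\to\infty}\A^n(y)^{-1}\A^n(x)$ from \eqref{eq: canonical hol} together with $\A$-invariance of $\F$, which gives
$$\A^n(y)^{-1}\A^n(x)\bigl(\F(x)\bigr)=\A^n(y)^{-1}\bigl(\F(\s^n x)\bigr),\qquad \A^n(y)^{-1}\bigl(\F(\s^n y)\bigr)=\F(y).$$
Since $d(\s^n x,\s^n y)\le\theta^n d(x,y)$ along the stable set, $\alpha$-\hol continuity of $\F$ bounds the Grassmannian distance between $\F(\s^n x)$ and $\F(\s^n y)$ by $C_\F\theta^{n\alpha}d(x,y)^\alpha$, while applying $\A^n(y)^{-1}$ distorts Grassmannian distances by at most the factor $\|\A^n(y)\|\cdot\|\A^n(y)^{-1}\|$. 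Here is precisely where fiber-bunching enters: by compactness $\lambda:=\theta^{\alpha}\sup_x\|\A(x)\|\,\|\A(x)^{-1}\|<1$, so submultiplicativity along the orbit of $y$ gives $\|\A^n(y)\|\,\|\A^n(y)^{-1}\|\le\lambda^n\theta^{-n\alpha}$, and therefore
$$d_{\mathrm{Gr}}\!\bigl(\A^n(y)^{-1}\A^n(x)\F(x),\ \F(y)\bigr)\le\lambda^n\theta^{-n\alpha}\cdot C_\F\theta^{n\alpha}d(x,y)^\alpha=C_\F\,\lambda^n d(x,y)^\alpha\longrightarrow 0.$$
Letting $n\to\infty$ and using $\A^n(y)^{-1}\A^n(x)\to H^s_{x,y}$ in norm yields $H^s_{x,y}(\F(x))=\F(y)$.

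The main obstacle is exactly the estimate in the last display: one must recognize that the worst-case exponential \emph{expansion} $\theta^{-n\alpha}$ of Grassmannian distances under $\A^n(y)^{-1}$ is precisely cancelled by the exponential \emph{contraction} $\theta^{n\alpha}$ that $\alpha$-\hol continuity of $\F$ produces along stable sets, and that the \emph{strict} inequality in the fiber-bunching hypothesis supplies the remaining geometric factor $\lambda^n$ that forces the limit to vanish. This also makes transparent why fiber-bunching cannot simply be dropped.
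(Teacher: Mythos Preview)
Your argument is correct and is essentially the standard proof of this fact (the one in \cite{bochi2018extremal}). Note, however, that the present paper does not supply its own proof of Proposition \ref{prop: bochi-garibaldi}: the result is quoted directly from \cite[Corollary~3.5]{bochi2018extremal}, with only Remark \ref{rem: beyond fb} observing that the statement extends beyond the fiber-bunched case to $\alpha$-\hol cocycles whose canonical holonomies exist and are $\alpha$-\hol. So there is no in-paper argument to compare yours against; what you have written is exactly the proof one would reproduce from the cited source, and the two directions are handled in the expected way---the bracket/local-product structure for $H^{s/u}$-invariance $\Rightarrow$ \hol continuity, and the condition-number bound $\|\A^n\|\,\|\A^{-n}\|$ combined with the strict fiber-bunching inequality for the converse.

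One small remark for completeness: your displayed estimate uses the subshift-specific contraction rate $\theta$, whereas the proposition is phrased for a general hyperbolic homeomorphism. The argument of course goes through verbatim once $\theta$ is replaced by the stable contraction rate of the base and the fiber-bunching inequality is read accordingly; this is purely cosmetic.
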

\begin{rem}\label{rem: beyond fb}
While \cite[Corollary 3.5]{bochi2019extremal} is stated for fiber-bunched cocycles, the same result holds for $\alpha$-\hol cocycles whose canonical holonomies converge and are $\alpha$-\hol continuous, including $\alpha$-\hol cocycles satisfying \eqref{eq: equal exponents at periodic}. 
Moreover, Proposition \ref{prop: conformal subbundle} and \ref{prop: bochi-garibaldi} readily extend to our setting where the base dynamical system is a mixing subshift of finite type $(\Sig,\s)$.
\end{rem}
Hence, the conclusion of Proposition \ref{prop: conformal subbundle} for $\A\in C^\alpha(\Sig,\gltwo)$ satisfying \eqref{eq: equal exponents at periodic} may be stated as follows: either $\A$ preserves an $\alpha$-\hol continuous conformal structure on $\Sig \times \R^2$ or $\A$ is reducible. The former alternative is equivalent to the existence of an $\alpha$-\hol continuous conjugacy of $\A$ into the group of linear conformal transformations of $\R^2$. 
With this observation at hand, the proof for Theorem \ref{thm: B} now easily follows.

\begin{proof}[Proof of Theorem \ref{thm: B}]
Let $\A \in \HH$ be an irreducible cocycle. We divide the proof into a few cases.
\begin{enumerate}
\item There exists a periodic point $p \in \Sig$ of period $n$ such that $\A^{n}(p)$ has two eigenvalues of distinct absolute values. Let $\B$ be a cocycle over $(\Sig,\s^n)$ defined by $\B(x):=\A^n(x)$. 
\begin{enumerate}
\item In the case where $\B$ is irreducible, then Corollary \ref{cor: irred 1} applies to $p$ which is now a fixed point with respect to $\s^n$. Hence, $\B$ is weakly typical, which then implies that $\A$ is weakly typical. This gives the first alternative of Theorem \ref{thm: B}.
\item 
In the case where $\B$ is reducible, we get the second alternative of Theorem \ref{thm: B}; see Lemma \ref{lem: fixed} below for the proof.
\end{enumerate}
\item The absolute value of two eigenvalues of $\A^{\text{per}(p)}(p)$ are equal for every periodic point $p \in \Sig$. In this case, the assumption \eqref{eq: equal exponents at periodic} is satisfied. 
Proposition \ref{prop: conformal subbundle} and \ref{prop: bochi-garibaldi} then imply that either there exists an $\alpha$-\hol continuous conjugacy of $\A$ into the group of conformal linear transformations of $\R^2$ or $\A$ is reducible. 
Since $\A$ is irreducible, it must be that $\A$ falls into the third alternative of Theorem \ref{thm: B}.
\end{enumerate}
This completes the proof.
\end{proof}

\begin{lem}\label{lem: fixed} Let $\A \colon \Sig \to\gltwo$ be an irreducible fiber-bunched cocycle. Suppose there exists a periodic point $p\in \Sig$ of some period $n\in \N$ such that $\A^n(p)$ has two eigenvalues of distinct absolute values. If $\B:=\A^n$ is reducible, then $\A$ interchanges two bi-holonomy invariant line bundles.
\end{lem}
\begin{proof}
Let $L$ be the bi-holonomy invariant and $\B$-invariant line bundle. Then there are $n$ bi-holonomy invariant (but not $\A$-invariant nor necessarily distinct) line bundles $\{L_1,\ldots, L_n\}$ defined by $L_i:=\A^{i} L$; that is, $L_i(x):=\A^{i}(\s^{-i}x)L(\s^{-i}x)$. Some of these line bundles might coincide with one another, so we denote the distinct lines bundles among them by $\{\L_1,\ldots,\L_k\}$.

Note $k \geq 2$ because otherwise the irreducibility assumption on $\A$ would be violated. By distinct line bundles, we mean that for $i \neq j$, there exists $x \in \Sig$ such that $\L_i(x) \neq \L_j(x)$. In this case, we will show that if $i \neq j$, then in fact $\L_i(x)$ differs from $ \L_j(x)$ at every $x \in \Sig$.\\

\noindent\textbf{Claim:} For $i\neq j$, we have $\L_i(x) \neq \L_j(x)$ for every $x \in \Sig$.

\begin{proof}[Proof of Claim]
Suppose, for the sake of contradiction, that $\L_{i_0}(x) = \L_{j_0}(x)$ for some $i_0\neq j_0 $ and $x=(x_n)_{n\in \Z}\in \Sig$. From bi-holonomy invariance of $\L_i$'s, it follows that $\L_{i_0}$ and $\L_{j_0}$ agree on 1-cylinder $[x_0]$.

Letting $i_1,j_1\in \{1,\ldots,k\}$ be the indices such that $\L_{\chi_1}= \A\L_{\chi_0}$ for $\chi \in \{i,j\}$. From the previous paragraph, it follows that $\L_{i_1}$ and $\L_{j_1}$ agree on all $y$ with $\s^{-1}y\in [x_0]$.
Notice also that $i_1 \neq j_1$ because if they were the same, then this would imply that $\L_{i_0}$ and $\L_{j_0}$ agree everywhere, contradicting the fact that $\L_i$'s are distinct line bundles.

Repeating this argument iteratively for each $m\in \N$ gives distinct indices $i_m,j_m\in \{1,\ldots,k\}$ such that $\L_{\chi_m} = \A\L_{\chi_{m-1}}$ for $\chi \in \{i,j\}$ and that $\L_{i_m}$ and $\L_{j_m}$ agree on all $y$ with $\s^{-m}y \in [x_0]$.

Recall that $(\Sig,\s)$ is a mixing subshift of finite type with $q$ letters defined by a primitive matrix $T$. Letting $m_0\in \N$ be the mixing rate of $(\Sig,\s)$, we can find $y^{(r)}\in [r]$ for each $r \in \{1,\ldots,q\}$ such that $\s^{-m_0} y^{(r)}\in [x_0]$. This implies that $\L_{i_{m_0}}$ and $\L_{j_{m_0}}$ agree at $y^{(r)}$ for each $r \in \{1,\ldots,q\}$. From bi-holonomy invariance of $\L_i's$, two line bundles $\L_{i_{m_0}}$ and $\L_{j_{m_0}}$ agree everywhere on $\Sig$. However, this is a contradiction to the fact that $\L_i$'s are distinct line bundles.
\end{proof}

We now conclude that $k=2$. This is because if $k \geq 3$, then $\A^{n}(p)$ preserves the union of $k$-distinct lines $\{\L_1(p),\ldots, \L_k(p)\}$, and hence, it is conjugated to a conformal linear transformation. However, this is contradictory to the assumption that $\A^{n}(p)$ has two eigenvalues of distinct absolute values.
Therefore, $k=2$ and $\A$ interchanges $\L_1$ and $\L_2$ because otherwise the irreducibility assumption of $\A$ would be violated.
\end{proof}

We end this section by commenting on the differences between the typicality assumptions from \cite{bonatti2004lyapunov} and from Definition \ref{defn: weakly typical}. 

\begin{rem}\label{rem: difference in typical}
In Bonatti and Viana \cite{bonatti2004lyapunov} where typicality was first introduced, a fiber-bunched $\text{SL}_n(\R)$-cocycle is called \textit{typical} if it satisfies the pinching and twisting assumption. While their pinching assumption is analogous to the pinching assumption from Definition \ref{defn: weakly typical}, their twisting assumption is more restrictive.
The twisting assumption of \cite{bonatti2004lyapunov} requires that there exists a single homoclinic point $z \in \H(p)$ whose holonomy loop $\psi_p^z$ twists all eigendirections. In our Definition \ref{defn: weakly typical}, we allow each $v_+,v_- \in \R\mathbb{P}^1$ to be twisted under holonomy loops of different homoclinic points $z_+,z_- \in \H(p)$, respectively. 
Our definition of weak typicality is flexible enough to establish the  trichotomy in Theorem \ref{thm: B}, and yet has enough structures to guarantee the uniqueness of the equilibrium state; see Proposition \ref{prop: typical implies QM} and \ref{prop: qm then unique eq}. 

Moreover, typicality in \cite{bonatti2004lyapunov} implies that the cocycle is necessarily \textit{strongly irreducible}, meaning that there is no finite union of subspaces preserved under the action of the cocycle. However, our definition of weak typicality in Definition \ref{defn: weakly typical} does not necessarily imply that the cocycle is strongly irreducible. Indeed, we only require that $\psi_p^{z_\tau}(v_\tau) \neq v_\tau$ for each $\tau \in \{+,-\}$.	
In particular, it could happen that $\psi_p^{z_+}(v_+)=v_-$ and $\psi_p^{z_-}(v_-)=v_+$, and such possibility is the reason why weakly typical cocycles may fail to be strongly irreducible.

Lastly, our setting is slightly more general than \cite{bonatti2004lyapunov} that we consider cocycles in $\HH$ which contains the set of fiber-bunched cocycles $C^\alpha_b(\Sig,\gltwo)$.
\end{rem}

\section{Proof of Theorem \ref{thm: A}}\label{sec: thm A}
We prove Theorem \ref{thm: A} in this section. We begin by establishing the uniqueness of the equilibrium state for $\Phi_\A$ in the case of (2) and (3) of Theorem \ref{thm: B}. Then we introduce the notion of quasi-multiplicativity, a property satisfied by all weakly typical cocycles, and explain how the uniqueness of the equilibrium state for $\Phi_\A$ follows from it; see Definition \ref{defn: qm}, Proposition \ref{prop: typical implies QM} and \ref{prop: qm then unique eq}.
Then in the subsequent subsection, we sketch the proof of Proposition \ref{prop: typical implies QM}.

\subsection{Unique equilibrium state for $\Phi_\A$ in the case of (2) and (3) of Theorem \ref{thm: B}} Given an irreducible cocycle $\A \in \HH$, it has to belong to one of three cases listed in Theorem \ref{thm: B}. We explain in this subsection how the last two of the three cases imply the uniqueness of the equilibrium states for $\Phi_\A$.

For the second alternative of Theorem \ref{thm: B} where there exist two bi-holonomy invariant line bundles interchanged by the action of $\A$, by conjugating $\A$ if necessary, we may assume that $\A$ takes the following form:
$$\A(x)=\begin{pmatrix}
0 & a(x) \\
b(x) & 0
\end{pmatrix}.$$

Then consider a cocycle $\B$ over $(\Sig,\s^2)$ defined by $\B(x):=\A^2(x)$; then $\B(x)$ is a diagonal matrix with entries given by $a(\s x)b(x)$ and $a(x)b(\s x)$. From Theorem \ref{thm: C}, the norm potential $\Phi_\B$ has a unique equilibrium state unless two additive potentials $\a (x):=\log |a(\s x)b(x)|$ and $\b(x):=\log |a(x)b(\s x)|$ have equal pressures but are not cohomologous with respect to $\s^2$.

If $\Phi_\B$ has a unique equilibrium state, such equilibrium state must also be the unique equilibrium state for $\Phi_\A$. This is because $\P(\Phi_\B,\s^2) =2 \P(\Phi_\A,\s)$, and hence, any equilibrium state for $\Phi_\A$ is an equilibrium state for $\Phi_\B$; see for instance \cite[Lemma 4.10]{call2020k}.

If instead $\Phi_\B$ has two distinct equilibrium states $\mu_1,\mu_2 \in\M(\s^2)$, each corresponding to $\a$ and $\b$, we will show that $\Phi_\A$ has a unique equilibrium state given by the average of $\mu_1$ and $\mu_2$.

\begin{lem} If $\Phi_\B=\Phi_{\A^2}$ has two distinct equilibrium states $\mu_1,\mu_2 \in \M(\s^2)$, then $\Phi_\A$ has a unique equilibrium state given by $\displaystyle \frac{\mu_1+\mu_2}{2}$.
\end{lem}
\begin{proof}
Suppose that $\Phi_\A$ has two distinct equilibrium states $\mu,\nu\in\M(\s)$. Considered as equilibrium states of $\Phi_\B$, each has to be a linear combination of $\mu_1$ and $\mu_2$. From Lemma \ref{lem: s inv} below, this implies that $\mu_1$ and $\mu_2$ are $\s$-invariant. In particular, $\mu_1$ is an equilibrium state for $\a/2$ over $\s$. This follows because $\mu_1$ is an equilibrium state for $\Phi_{\B}$ which is also $\s$-invariant. Then
\begin{align*}
\P(\Phi_{\A^2},\s^2) = \P(\a,\s^2) = h_{\mu_1}(\s^2)&+\int \a\,d\mu_1 = 2\Big(h_{\mu_1}(\s)+\int \a/2\,d\mu_1\Big) \\
&\leq 2\P(\a/2,\s) \leq \P(\a,\s^2)
\end{align*}
where the last inequality is due to the fact that any $\s$-invariant measure, including any equilibrium state for $\a/2$, can be thought of as a $\s^2$-invariant measure. Hence, all inequalities are indeed equalities, and $\mu_1$ is an equilibrium state for $\a/2$ over $\s$. Likewise, analogous argument shows that $\mu_2$ is an equilibrium state for $\b/2$ over $\s$.

However, this is a contradiction to $\mu_1$ and $\mu_2$ being distinct measures because considered as potentials over $(\Sig,\s)$, $\a/2$ and $\b/2$ are cohomologous:
$$\a/2 -\b/2 = h\circ \s -h$$
where $\displaystyle h:=\frac{1}{2}(\log |a| - \log|b|)$. 
Hence $\Phi_\A$ has a unique equilibrium state $\mu_\A \in \M(\s)$.

In order to show that $\mu_\A$ is the average of $\mu_1$ and $\mu_2$, first notice that $\s_*\mu_1$ coincides with $\mu_2$. This follows because $\s_*\mu_1$ is an ergodic equilibrium state for $\Phi_\B$ distinct from $\mu_1$ (if it were equal to $\mu_1$ itself, then $\mu_1$ would be $\s$-invariant, and by applying the same argument to $\mu_2$, we would end up in the contradictory setting of the previous paragraph), so it must be $\mu_2$. Then notice that $\displaystyle \frac{\mu_1+\s_*\mu_1}{2}=\frac{\mu_1+\mu_2}{2}$ is $\s$-invariant from the $\s^2$-invariance of $\mu_1$ and an equilibrium state for $\Phi_\A$. From the uniqueness of the equilibrium state for $\Phi_\A$ established in the previous paragraph, it follows that $\mu_\A$ is equal to $\displaystyle \frac{\mu_1+\mu_2}{2}$.
\end{proof}

\begin{lem}\label{lem: s inv} Let $\mu_1,\mu_2$ be $\s^2$-invariant. If there exist more than one $\g \in [0,1]$ such that $\g \mu_1+(1-\g)\mu_2$ is $\s$-invariant, then $\mu_1,\mu_2$ are $\s$-invariant.
\end{lem}
\begin{proof}
Suppose there exist distinct $\g_1,\g_2 \in [0,1]$ such that both $\mu = \g_1 \mu_1+(1-\g_1)\mu_2$ and $\nu = \g_2 \mu_1+(1-\g_2)\mu_2$ are $\s$-invariant. 

If one of $\g_1$ and $\g_2$ is 0 or 1, suppose without loss of generality that $\g_1=1$, then $\mu= \mu_1$ is $\s$-invariant. Since both $\nu$ and $\mu_1$ is $\s$-invariant, so is $\mu_2$.

If neither $\g_1$ and $\g_2$ are 0 nor 1, then we have
$\displaystyle \frac{1}{\g_i}\mu = \mu_1 +\frac{1-\g_i}{\g_i}\mu_2$ for $i = 1,2$ and by subtracting the equation for $i=1$ from the equation for $i=2$
we get that
$$\frac{1}{\g_2}\nu  - \frac{1}{\g_1}\mu = \Big(\frac{1-\g_2}{\g_2} - \frac{1-\g_1}{\g_1}\Big)\mu_2.$$
The left hand side is $\s$-invariant and the coefficient of $\mu_2$ is nonzero (as $\g_1 \neq \g_2$), we have that $\mu_2$ is $\s$-invariant. Similarly, we get that $\mu_1$ is also $\s$-invariant. 
\end{proof}

For the third alternative of Theorem \ref{thm: B} where there exists a $\b$-\hol conjugacy $\CC \colon \Sig \to \gltwo$ such that $\B(x) = \CC(\s x)\A(x)\CC(x)^{-1}$ is conformal, the conformality of $\B$ implies that the norm of $\B^n$ is \textit{multiplicative}:
$$\|\B^n(x)\| = \prod_{i=0}^{n-1}\|\B(\s^ix)\|$$
for any $x \in \Sig$ and $n \in \N$. Then $\Phi_\B=\{\log \|\B^n(\cdot )\|\}_{ n\geq 0}$ becomes a \hol continuous additive cocycle generated by $\vphi_\B(x):=\log\|\B(x)\|$ in the sense that $S_n\vphi(x) = \log\|\B^n(x)\|
$. Hence, $\Phi_\B$ has a unique equilibrium state $\mu \in \M(\s)$ from Proposition \ref{prop: bowen}. Since $\A$ and $\B$ are conjugated by a continuous conjugacy $\CC$, the set of equilibrium states for their norms potentials are the same, and hence $\mu$ is the unique equilibrium state for $\Phi_\A$.
 
\subsection{Quasi-multiplicativity}\label{subsec: qm}
In view of the previous subsection, what is left to prove in Theorem \ref{thm: A} is how weak typicality guarantees the uniqueness of the equilibrium state for the norm potential. For weakly typical cocycles $\A$, the norm of $\A^n$ is not necessarily multiplicative, but it is close to being multiplicative in the following sense. For every $n \in \N$ and $\I \in \L(n)$, define
$$\|\A(\I)\| := \sup\limits_{x \in [\I]} \| \A^n(x) \|.$$

\begin{defn}\label{defn: qm} We say $\A \in \HH$ is \textit{quasi-multiplicative} if there exist $c>0, k\in \N$ such that for any $\I,\J \in \L$, there exists $\K = \K(\I,\J)$ with $\I\K\J \in \L$ such that $|\K| \leq k$ and
$$\|\A(\I\K\J)\| \geq c\|\A(\I)\| \|\A(\J)\|.$$ 
\end{defn}
\begin{rem}
Quasi-multiplicativity resembles Bowen's specification property \cite{bowen1974some}.
\end{rem}

The following proposition from the second author's previous result \cite{park2020quasi} states that weakly typical coycles are quasi-multiplicative.

\begin{prop}\cite[Theorem A]{park2020quasi}\label{prop: typical implies QM}
Let $\A \in \U_w$ be a weakly typical cocycle. Then $\A$ is quasi-multiplicative.
\end{prop}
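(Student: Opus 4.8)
The plan is to exploit the pinching and twisting structure of a typical cocycle $\A \in \U$ at a periodic point $p$ to "connect" arbitrary words $\I$ and $\J$ by a short word $\K$ so that the product $\A(\I\K\J)$ retains a definite fraction of $\|\A(\I)\|\cdot\|\A(\J)\|$. The obstruction to multiplicativity is that the most-expanded direction of $\A^{|\J|}$ on $[\J]$ (roughly, the top left singular direction of $\A^{|\J|}(y)$ for $y\in[\J]$) may be sent by $\A^{|\J|}$ into a direction that is nearly orthogonal to the most-contracted direction of $\A^{|\I|}$ on $[\I]$; composing then loses almost all the norm. The pinching/twisting data provides a finite repertoire of linear maps (built from $\A^{\mathrm{per}(p)}(p)$ and the holonomy loops $\psi_p^{z_\pm}$) whose iterates and combinations act minimally enough on $\RP^1$ to rotate any given line away from any given "bad" line by a definite angle, using only boundedly many symbols.

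\textbf{Step 1 (Reduce to a projective transversality statement via holonomies).} Using the bounded distortion estimate \eqref{eq: bdd distortion} and the \hol continuity of the canonical holonomies, I would first show that $\|\A(\I\K\J)\|$ is comparable, up to a uniform multiplicative constant depending only on $\A$ and $|\K|$, to $\|\A^{|\I|}(x)\,M\,\A^{|\J|}(y)\|$ for representative points $x\in[\I]$, $y\in[\J]$, where $M$ is the product of holonomies and the short cocycle block over $\K$. The quantity $\|\A^{|\I|}(x)\,M\,\A^{|\J|}(y)\|$ is, up to a bounded factor, $\|\A^{|\I|}(x)\|\cdot\|\A^{|\J|}(y)\|$ times $|\sin\angle(M(u_{\J,y}),\,s_{\I,x})|$, where $u_{\J,y}\in\RP^1$ is the top right singular direction of $\A^{|\J|}(y)$ and $s_{\I,x}\in\RP^1$ is the top \emph{left} singular direction of $\A^{|\I|}(x)$ (equivalently the direction that $\A^{|\I|}(x)$ expands maximally, read on the source side as the line \emph{not} collapsed). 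So it suffices to produce, for any pair of lines $(\xi,\eta)\in\RP^1\times\RP^1$, a word $\K$ of bounded length whose associated map $M=M(\I,\K,\J)$ satisfies $\angle(M\xi,\eta)\ge \delta_0$ for a uniform $\delta_0>0$.

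\textbf{Step 2 (Connect to the periodic point and invoke pinching/twisting).} Since $(\Sig,\s)$ is mixing, there are admissible words of uniformly bounded length joining the end of $\I$ to $p$ and joining $p$ to the start of $\J$; concatenating $\I$ with such a connector, a fixed high power of the periodic word at $p$, the homoclinic excursions realizing $z_+$ and $z_-$, possibly interleaved with more powers of the periodic word, and then the connector to $\J$, produces candidate words $\K$. The point is that the set of projective maps obtained this way contains (after passing to holonomies as in Step 1) all maps of the form $A^N \psi_\tau A^{N'} \cdots$ where $A$ is the projective action of $\A^{\mathrm{per}(p)}(p)$ — with attracting fixed point $v_+$ and repelling fixed point $v_-$ — and $\psi_\tau$ twists $v_\tau$. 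Given $(\xi,\eta)$: if $\xi\ne v_-$, a large power $A^N$ pushes $\xi$ close to $v_+$; if the target is near $v_+$ we first apply a twist $\psi_+$ (which moves $v_+$ off itself, hence moves a small neighborhood of $v_+$ off $v_+$) and then $A^{N'}$ to push that away from $v_+$; the complementary degenerate cases ($\xi$ near $v_-$, or $\eta$ near $v_-$) are handled symmetrically using $\psi_-$. A short case analysis on the positions of $\xi,\eta$ relative to $\{v_+,v_-\}$ yields a finite list of words, all of length bounded by some $k$, such that for \emph{every} $(\xi,\eta)$ at least one choice gives $\angle(M\xi,\eta)\ge\delta_0$; taking $c$ to absorb all the bounded holonomy/distortion/block constants and the minimum over this finite list finishes the proof.

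\textbf{The main obstacle} is Step 2: making the "finitely many bounded-length words suffice uniformly over all $(\xi,\eta)$" claim precise. One must quantify, uniformly in $(\xi,\eta)$, how large $N,N'$ need to be and verify that the required powers stay bounded — this is exactly where one uses that the attraction/repulsion of $A$ is \emph{uniform} (simple eigenvalues of distinct norm at a single periodic point) together with the \emph{finiteness} of the needed configurations, so that a single $N$ works for all inputs by a compactness argument on $\RP^1\times\RP^1$ minus small neighborhoods of the bad diagonal. A secondary technical point is bookkeeping the holonomies: one must check that replacing true cocycle products by holonomy-conjugated products only costs a uniform constant, which is precisely the content of the bounded-distortion argument already given in the preliminaries, so this part is routine. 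Since Proposition \ref{prop: typical implies QM} is quoted from \cite{park2019quasi}, in the paper itself I would only sketch Steps 1–2 at this level of detail and refer to \cite{park2019quasi} for the full argument.
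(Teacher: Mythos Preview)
Your outline is correct in spirit and shares the core mechanism with the paper's proof: both reduce quasi-multiplicativity to a uniform projective transversality statement in $\RP^1$ and then use the pinching map $P:=\A^{\mathrm{per}(p)}(p)$ together with the holonomy loops $\psi_p^{z_\pm}$ to force that transversality with boundedly many symbols. Two points deserve comment.

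First, there are slips in the details. In Step~1 the ordering is reversed: since $\A^n(x)=\A(\s^{n-1}x)\cdots\A(x)$, for $z\in[\I\K\J]$ one has $\A^{|\I\K\J|}(z)=\A^{|\J|}(\cdot)\,M\,\A^{|\I|}(\cdot)$, so $M$ acts on the \emph{output} direction $u(\A^{|\I|}(x))$ and must land transversally to $v(\A^{|\J|}(y))^\perp$; your identification of which singular directions play the roles of $\xi$ and $\eta$ is scrambled. In Step~2 you write that after twisting by $\psi_+$ you would ``then $A^{N'}$ to push that away from $v_+$,'' but $v_+$ is the \emph{attracting} fixed point of $A$, so $A^{N'}$ would undo the twist and send you back toward $v_+$. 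The correct move is simply to stop after $\psi_+ A^N$: the image sits near $\psi_+(v_+)\neq v_+$, which already avoids any $\eta$ that was close to $v_+$. With these repairs your case analysis does go through in $\gltwo$.

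Second, a genuine methodological difference: the paper does not run an asymmetric case analysis on $(\xi,\eta)$. Instead it introduces the adjoint cocycle $\A_*$ over $(\Sig,\s^{-1})$ and treats the $\J$-side symmetrically. The $\I$-direction is pushed close to $v_+$ (via $P^\ell$, with at most one detour through $z_-$), while the $\J$-direction, viewed through $\A_*$ --- for which the roles of $z_+$ and $z_-$ interchange --- is pushed close to $w_+=(v_-)^\perp$ (via $(P^*)^\ell$, with at most one detour through $z_+$). Since $\rho(v_+,w_+)=\pi/2-\rho(v_+,v_-)$ is bounded away from $\pi/2$, the transversality needed for Remark~\ref{rem: linear algebra} is automatic, with no case split on $\eta$ at all. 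This adjoint symmetrization is the main structural device your sketch omits; your direct approach is viable in dimension two, but the adjoint trick is what the argument in \cite{park2019quasi} actually uses and what scales to higher rank.
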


\begin{rem}
\cite{park2020quasi} proves Proposition \ref{prop: typical implies QM} in the setting of fiber-bunched $\glr$-cocycles. As noted in Remark \ref{rem: difference in typical}, weak typicality in this paper defined as in Definition \ref{defn: weakly typical} is more general and differs slightly from typicality defined in \cite{bonatti2004lyapunov} and \cite{park2020quasi}. However, Proposition \ref{prop: typical implies QM} still holds for weakly typical cocycles with little modifications, and we briefly sketch the proof in the following subsection. 
\end{rem}

When $\A\in \HH$ is quasi-multiplicative, then the following proposition from Feng \cite{feng2011equilibrium} and Feng and K\"{a}enm\"{a}ki \cite{feng2010equilibrium} establishes the uniqueness of the equilibrium state for the norm potential $\Phi_\A$.

\begin{prop}\cite{feng2011equilibrium,feng2010equilibrium}\label{prop: qm then unique eq}
Suppose $\A \in \HH$ is quasi-multiplicative. Then $\Phi_\A = \{\log \vphi_{\A,n}\}_{n\in\N}$ has a unique equilibrium state $\mu_\A \in \M(\s)$, and $\mu_\A$ has the following subadditive Gibbs property: there exists $C \geq 1$ such that for any $n \in \N$ and $\I \in \L(n)$, we have 
$$
C^{-1} \leq \frac{\mu_\A([\I])}{e^{-n\P(\Phi_\A)}\|\A^n(x)\|} \leq C
$$
for any $x \in [\I]$.
\end{prop}

Note that Theorem \ref{thm: A} now follows from Theorem \ref{thm: B}, Proposition \ref{prop: typical implies QM} and \ref{prop: qm then unique eq}. Before sketching the proof of Proposition \ref{prop: typical implies QM}, a few remarks on Proposition \ref{prop: qm then unique eq} are in order.

\begin{rem}
In \cite{feng2010equilibrium}, Proposition \ref{prop: qm then unique eq} is established for quasi-multiplicative locally constant cocycles over one-sided shifts. The result is then extended for quasi-multiplicative functions $\vphi \colon \L \to \R$ in \cite{feng2011equilibrium}, covering more general class of subadditive potentials. Moreover, the bounded distortion property \eqref{eq: bdd distortion} of the norm potentials for cocycles in $\HH$ allows the result to be extended to norm potentials of quasi-multiplicative cocycles. Such result for fiber-bunching cocycles $C^\alpha_b(\Sig,\glr)$ are established in \cite{park2020quasi}. The only use of fiber-bunching assumption there is to establish the convergence as well as the \hol continuity of the canonical holonomies $H^{s/u}$, and hence, the same result holds for $\A \in \HH$ as well. This is similar in spirit to how Proposition \ref{prop: bochi-garibaldi} is stated and proved for fiber-bunched cocycles in \cite{bochi2019extremal}, but the statement holds for more general cocycles; see Remark \ref{rem: beyond fb}.
\end{rem}

\begin{rem}\label{rem: loc constant and irred then qm}
Given a finite set of matrices $\mathsf{A} := \{A_1,\ldots,A_q\} \subset \glr$, we say $\mathsf{A}$ is \textit{irreducible} if there does not exists a non-zero proper subspace $V\subset \R^d$ such that $A_iV = V$ for all $1\leq i \leq q$. Denoting by $\Sigma^+$ a one-sided full shift generated by $q$ alphabets, Feng and K\"{a}enm\"{a}ki \cite{feng2010equilibrium} established that if $\mathsf{A}= \{A_1,\ldots,A_q\}\subset \glr$ is irreducible and $\A \colon \Sigma^+ \to \glr$ is a locally constant cocycle given by $\A(x) := A_{x_0}$ where $x_0$ is the 0-th entry of $x = (x_i)_{i \in \N_0}$, then the norm potential $\Phi_\A$ is quasi-multiplicative. Hence $\Phi_\A$ has a unique equilibrium state from Proposition \ref{prop: qm then unique eq}. The same result extends via same methods to locally constant cocycles over two-sided full shifts $(\Sigma,\s)$ generated by irreducible sets of matrices.

We also remark that for such $\gltwo$-cocycles $\A$, the irreducibility of its generating set $\mathsf{A}$ as in the paragraph above is equivalent to the irreducibility of $\A$ as in Definition \ref{defn: reducible}. 
Consider a locally constant $\gltwo$-cocycle $\A$ over $\Sigma$ generated by $\mathsf{A}$ which is reducible as in Definition \ref{defn: reducible}. Since $\A$ is locally constant, we have $H^{s/u} \equiv \id$, and from bi-holonomy invariance the corresponding line bundle $L$ must consist of $q$ lines $\{L_i\}_{i=1}^q$ invariant under the action of $\A$. As $\Sigma$ is a full shift, we must have $A_i L_j = L_i$ for all $1 \leq i,j \leq q$. Fixing an $i$ and varying over all $j$ shows that all $L_i$'s are equal and hence $L$ is a constant line bundle. We have just shown that the generating set $\mathsf{A}$ is reducible. The other direction of the equivalence also follows from a similar reasoning.
\end{rem}

\subsection{Proof of Proposition \ref{prop: typical implies QM}}

We now sketch the proof of Proposition \ref{prop: typical implies QM} by following the proof of \cite[Theorem A]{park2020quasi}. The proof outlined below is simpler than the original proof appearing in \cite{park2020quasi} as we are working with $\gltwo$-cocycles. For statements not fully elaborated in the sketch of the proof, we refer the readers to \cite[Section 4]{park2020quasi} for details.

Let $\A \in \U_w$ be a weakly typical cocycle with a periodic point $p \in \Sig$ satisfying the pinching assumption and $z_\pm \in \H(p)$ be the homoclinic points satisfying the twisting assumption. 

We begin by making a few simplifying assumptions. By passing to a suitable power if necessary, we assume that $p$ is a fixed point of $\s$, and set $P:=\A(p)$; see \cite[Lemma 4.9]{park2020quasi}. Note that the weak typicality assumption on $\A$ is still valid after passing to a suitable power. This is because the holonomies $H^{s/u}$ for the cocycle $\A$ over $\s$ coincide with the holonomies for the cocycle $\A^n$ over $\s^n$.

Note that if $z$ belongs to $\H(p)$, then so does any point in its orbit. Moreover, the homoclinic loops of two homoclinic points in the same orbit are conjugated to each other by some power of $P$: for any $z \in \H(p)$ and $r \in \Z$, we have 
$$
\psi_p^z = P^{-r} \circ\psi_p^{\s^r z} \circ P^r.
$$
Hence, if $z_{\pm} \in \H(p)$ satisfies the twisting assumption (i.e., $\psi_p^{z_\tau}(v_{\tau}) \neq v_{\tau}$ for $\tau \in \{+,-\})$, so does $\s^rz_{\pm} \in \H(p)$ for any $r\in \Z$. So we may assume that $z_- $ belongs to $\Wloc^u(p)$ by replacing it by its suitable pre-image under $\s$. Similarly, we may assume that $z_+$ belongs to $\Wloc^s(p)$.

We now set up a few notations. As in the proof of Lemma \ref{lem: irred 1}, let $\rho$ be the angular distance on $\RP^1$. A $\delta$-ball in $\RP^1$ centered at $v$ with respect to $\rho$ will be denoted by $B_\rho(v,\delta)$. For any $A \in \gltwo$, we choose a singular value decomposition
$$A = U\Lambda V^\intercal$$
such that the singular values in $\Lambda$ are listed in a non-increasing order. We define $u(A)$ and $v(A)$ as the first columns of $U$ and $V$, respectively. They are related by the following equation:
$$\|A\|u(A) = Av(A).$$
When there is no confusion, we will not distinguish the notations between vectors in $\R^2$ and their projections to $\RP^1$. 
\begin{rem}\label{rem: linear algebra}
$u(A)$ and $v(A)$ may be thought of as the most expanding direction of $A^*$ and $A$, respectively. 
Using linear algebra, it can be shown that given $\ep >0$, there exists $c>0$ such that for any $A,B \in \gltwo$, 
$$
\pi/2-\rho(v(A),u(B)) >\ep \implies \|AB\| \geq c \|A\|\|B\|.
$$
In fact, a slightly stronger statement of similar flavor is true; see \cite[Lemma 4.5]{park2020quasi}: for any $\ep>0$, we have
$$
\rho(v(A)^\perp,Cu(B))>\ep \implies \|ACB\| \geq \|A\|\|B\| \cdot \sin(\ep)\cdot \|C^{-1}\|^{-1}.
$$
This will be used in establishing quasi-multiplicativity for weakly typical cocycles.
\end{rem}
We say $a,b,c,d \in \Sig$ (in the prescribed order) form a \textit{holonomy rectangle}, and denote it by $[a,b,c,d]$, if 
$$b \in \Wloc^s(a),~c \in \Wloc^u(b),~d \in \Wloc^s(c),\text{ and } a \in \Wloc^u(d).$$
By an \textit{edge} of such a rectangle we mean two adjacent vertices in the rectangle such as $ab$ and $bc$, and by the \textit{length} of an edge we mean the distance between the endpoints of the edge.
For such a rectangle $[a,b,c,d]$, we define $$H[a,b,c,d]:=H^u_{d,a}\circ H^s_{c,d}\circ H^u_{b,c}\circ H^s_{a,b}.$$ 

\begin{lem}\label{lem: m(ep)}
Given $\ep>0$, there exists $m:=m(\ep) \in \N$ such that for any holonomy rectangle $[a,b,c,d]$ and any $v\in \RP^1$,
\begin{enumerate}
\item If one of (hence, a pair of) the edges of the rectangle has length at most $\theta^m$, then
$$\rho(H[a,b,c,d](v),v) <\ep/2.$$
\item If all edges of the rectangle have length at most $\theta^m$, then
$$\rho(H^u_{b,c}\circ H^s_{a,b}(v),v)<\ep/2 \text{ and }\rho(H^s_{d,c}\circ H^u_{a,d}v,v)<\ep/2,$$  
\end{enumerate}
\end{lem}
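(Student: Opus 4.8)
The plan is to exploit the \hol continuity of the canonical holonomies, condition \ref{eq: b}, which gives a constant $C>0$ with $\|H^{s/u}_{x,y}-I\| \leq Cd(x,y)^\b$ whenever $y \in \W^{s/u}_{\text{loc}}(x)$. I would first translate this into a bound on the angular distance: there is $C'>0$ such that $\rho(H^{s/u}_{x,y}(v),v) \leq C'd(x,y)^\b$ for all $v \in \RP^1$, uniformly in $x,y$ with $y \in \W^{s/u}_{\text{loc}}(x)$ and $d(x,y)$ small. This is elementary linear algebra: if $A = I + E$ with $\|E\|$ small then the direction of $Av$ differs from that of $v$ by $O(\|E\|)$, uniformly in $v$, since $A$ is uniformly bounded away from having $v$ in its kernel once $\|E\| < 1/2$, say. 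Because $\rho$ is a metric on $\RP^1$ (as used already in the proof of Lemma \ref{lem: irred 1}), angular displacements compose subadditively along a chain of holonomies.

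Next, for part (1), observe that in a holonomy rectangle $[a,b,c,d]$ the four edges come in two pairs of "the same length" in the sense of the metric $d$: the stable edges $d(a,b)$ and $d(c,d)$ are comparable and the unstable edges $d(b,c)$ and $d(a,d)$ are comparable, because passing through a bracket only changes the relevant coordinate count by a bounded amount — in fact for a genuine rectangle built from local stable/unstable sets one has $d(a,b) = d(c,d)$ and $d(b,c) = d(a,d)$ up to the usual factor of $\theta$. Hence if one edge has length at most $\theta^m$, so does its partner; call the common order of magnitude $\leq \theta^m$ for that pair, while the other pair may be large. Writing $H[a,b,c,d] = H^u_{d,a}\circ H^s_{c,d}\circ H^u_{b,c}\circ H^s_{a,b}$ and applying the triangle inequality for $\rho$ along the four factors, each of the two factors coming from the short pair contributes at most $C'\theta^{\b m}$ to the angular displacement, while the two factors from the long pair cancel up to a small error: $H^u_{b,c}$ and $H^u_{d,a}$ are holonomies along unstable edges that become \emph{equal} in the limit as the short (stable) edges shrink to zero, since $a,b$ collapse to a single point and $c,d$ collapse to a single point. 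Concretely, $H^u_{d,a} = H^u_{c,b}$ composed with short-stable holonomies $H^s_{a,b}$, $H^s_{c,d}$ (using the commutation relations among holonomies and the cocycle), so $H^u_{b,c}$ and $(H^u_{d,a})^{-1}$ differ by a product of short-stable holonomies, each within $C'\theta^{\b m}$ of the identity. Collecting the finitely many ($\leq$ a fixed number of) error terms, each bounded by $C'\theta^{\b m}$, the total angular displacement is at most a fixed constant times $\theta^{\b m}$; choosing $m = m(\ep)$ large enough that this is $<\ep/2$ finishes part (1). Part (2) is strictly easier: if \emph{all} four edges are at most $\theta^m$ then $H^u_{b,c}\circ H^s_{a,b}$ is a composition of just two holonomies, each within $C'\theta^{\b m}$ of the identity, so $\rho(H^u_{b,c}\circ H^s_{a,b}(v),v) \leq 2C'\theta^{\b m} < \ep/2$ for $m$ large, and symmetrically for $H^s_{d,c}\circ H^u_{a,d}$.

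The main obstacle, and the only point requiring genuine care rather than bookkeeping, is part (1): making precise the cancellation of the "long-edge" holonomies $H^u_{b,c}$ and $H^u_{d,a}$ when the stable edges are short. The clean way to do this is to use the holonomy identities from Definition \ref{defn: local holonomy} — specifically $H^s_{y,z}\circ H^s_{x,y} = H^s_{x,z}$ together with the compatibility $\A(x) = H^s_{\s y,\s x}\circ \A(y)\circ H^s_{x,y}$ and the global extension formula — to express $H^u_{d,a}$ as $H^s_{\,\cdot\,}\circ H^u_{b,c}\circ H^s_{\,\cdot\,}$ with the flanking stable holonomies taken along edges of length $\leq \theta^m$; then $H^u_{d,a}\circ(\text{stuff})\circ H^u_{b,c}$ telescopes to a product of four short-stable holonomies, and the bound follows. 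I would present this cancellation as the heart of the argument and relegate the comparability-of-edge-lengths remark and the $\rho$-versus-$\|\cdot-I\|$ comparison to one-line justifications. Uniformity of $m(\ep)$ over all rectangles and all $v$ is automatic since every estimate above is uniform in the base points and in $v$.
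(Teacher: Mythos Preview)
Your treatment of part (2) and the preliminary reductions (the $\|A-I\|$-to-$\rho$ comparison, the equality of opposite edge lengths in the subshift metric) are fine. The gap is in part (1), specifically in the claimed ``telescoping'' of the long-edge holonomies.

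You assert that the identities in Definition \ref{defn: local holonomy} let you write $H^u_{d,a} = H^s_{\,\cdot\,}\circ H^u_{c,b}\circ H^s_{\,\cdot\,}$ with the outer factors taken along the short stable edges. No such identity follows from those properties: Definition \ref{defn: local holonomy} gives a cocycle relation for $H^s$ along a \emph{single} stable leaf, an intertwining of each holonomy separately with $\A$, and continuity---but it gives no algebraic relation linking $H^s$ and $H^u$ across different leaves. Indeed, if an exact identity of the form $H^u_{d,a} = H^s_{b,a} H^u_{c,b} H^s_{d,c}$ held, then $H[a,b,c,d]=I$ for \emph{every} rectangle, which is exactly the statement you are trying to prove (approximately); the argument is circular. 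The ``cancellation'' of the long edges is not algebra---it is an analytic approximation, and it is the entire content of the lemma.

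What actually makes the long edges cancel when the short pair shrinks is the \emph{joint} continuity of $(x,y)\mapsto H^u_{x,y}$ on the compact set $\{(x,y):y\in\Wloc^u(x)\}$: if the stable edges satisfy $d(a,b)=d(c,d)\le\theta^m$ then the pairs $(a,d)$ and $(b,c)$ are close, so $H^u_{a,d}$ is close to $H^u_{b,c}$ and hence $H^u_{d,a}H^u_{b,c}\approx I$; combined with $H^s_{a,b},H^s_{c,d}\approx I$ from condition \ref{eq: b} and a uniform bound on $\|H^u\|$, this gives $H[a,b,c,d]\approx I$. The paper records this joint H\"older continuity as a consequence of condition \ref{eq: b} (see the discussion preceding Theorem \ref{thm: C}), and its one-line proof of the lemma is invoking exactly that. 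Alternatively, uniform continuity from Definition \ref{defn: local holonomy}(3) and compactness already suffices, since the lemma only asks for the existence of $m(\ep)$, not a H\"older rate. Replace your telescoping paragraph with this observation and the proof goes through.
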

\begin{proof}
Both claims follow from \hol continuity of the canonical holonomies $H^{s/u}$.	
\end{proof}

For the proof of Proposition \ref{prop: typical implies QM}, we need to make use of the adjoint cocycle $\A_*$ over $(\Sig,\s^{-1})$ defined as follows: for any $x \in \Sig$ and $u,v \in \R^2$,
$$\langle \A_*(x)u, v \rangle =  \langle u, \A(\s^{-1}x)v \rangle.$$
The adjoint cocycle $\A_*$ shares many properties with $\A$. For instance, if $\A$ admits holonomies $H^{s/u}$, then $\A_*$ also admits holonomies $H^{s/u,*}$ given by
$$H^{s,*}_{x,y} = (H^u_{y,x})^* \text{ and }H^{u,*}_{x,y} = (H^s_{y,x})^*.$$ 
Hence, $\A$ is weakly typical if and only if $\A_*$ is weakly typical, with roles of $z_+$ and $z_-$ switched; see \cite[Lemma 4.6]{park2020quasi}.

Let $v_\pm \in \R\mathbb{P}^1$ be the eigendirections of $P$ corresponding to eigenvalues $\lambda_\pm$ with $|\lambda_+| > |\lambda_-|$. Note that $\A_*(p) =P^*$ has the same eigenvalues $\lambda_\pm$ as $P$. Also, the eigendirections of $w_+,w_- \in\R \mathbb{P}^1$ of $P^*$ are equal to $(v_-)^\perp,(v_+)^\perp \in \R\mathbb{P}^1$, respectively. 

Now we fix some constants. Let 
$$\b := \rho(v_+,v_-)>0.$$
Since the top eigendirection $w_+ \in \RP^1$ of $P^*$ is equal to $(v_-)^{\perp}\in \RP^1$, $\b$ is also equal to $\pi/2-\rho(v_+,w_+)>0$.

As $\psi_p^{z_-}(v_-) \neq v_-$ from the twisting assumption, choose $\ep_0 \in (0,\beta/8)$ such that 
\begin{equation}\label{eq: psi twisting}
\rho(\psi_p^{z_-}(v),v_-) >\ep_0 \text{ for any }v \in B_\rho(v_-,\ep_0).
\end{equation}
Note that we will only make use of $z_-$ when working with $\A$; another twisting homoclinic point $z_+$ will be used when working with $\A_*$. 

Let $m:= m(\ep_0)$ from Lemma \ref{lem: m(ep)} and fix $\ell \in \N$ such that
\begin{enumerate}
\item $d(\s^\ell z_-,p) \leq \theta^m$, and
\item $P^\ell v \in B_\rho(v_+,\ep_0/2)$ for any $v \not\in B_\rho(v_-,\ep_0/2)$.
\end{enumerate}
Such choice of $\ell$ is possible because $\s^nz_- \to p$ as $n\to \infty$ and $P$ has simple eigenvalues $\lambda_\pm$ of distinct norms.

Lastly, by decreasing $\ep_0$ and increasing $m,\ell$ in the sequential order they are defined if necessary, we assume that they also work for the adjoint cocycle $\A_*$ with the relevant modifications. By relevant modifications, we mean that \eqref{eq: psi twisting} holds for $\ep_0>0$ with the roles of $z_-$ and $z_+$ switched and the role of $v_-\in \R\mathbb{P}^1$ replaced by $w_- \in\R\mathbb{P}^1$.
Moreover, we mean that $m$ satisfies Lemma \ref{lem: m(ep)} for $H^{s/u,*}$ and the defining properties of $\ell$ hold for $z_+$, $P^*$, and $w_\pm$.
We are done with preliminary set up for the proof of Proposition \ref{prop: typical implies QM}. 

\begin{proof}[Proof sketch of Proposition \ref{prop: typical implies QM}]
Let $\A\in \U_w$ be a weakly typical cocycle, $\ep_0,m,\ell\in\N$ be as above, and $\I,\J \in \L$ be any admissible words. 

Denoting the mixing rate of $\Sig$ by $\bar{\tau} \in \N$ (i.e., $T^{\bar{\tau}} >0)$, we can find $\omega \in [\I]$ such that $\s^{\bar{\tau}+|\I|}(\omega) \in \Wloc^s(p)$. By setting $\tau:=\tau(\I)=\bar{\tau}+m+|\I|$, we ensure that $d(\s^{\tau}\omega,p) \leq \theta^m$. 
\begin{lem} There exists $x\in [\I] \cap \W^s(p)$ such that $\s^\tau x \in\Wloc^u(\s^\tau\omega)$ and that 
$$
u_x:= H^s_{\s^{\tau+\ell}x,p}\circ \A^\ell(x)\circ H^u_{\s^\tau \omega, \s^\tau x}u(\A^\tau(\omega))
$$
belongs to $B_\rho(v_+,\ep_0/2)$. Moreover, $d(\s^{\tau+\ell}x,p)\leq \theta^m$. 
\end{lem}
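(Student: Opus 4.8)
The plan is to construct $x$ by a case split on where the direction $\hat u:=u(\A^\tau(\omega))\in\RP^1$ sits. If $\hat u\notin B_\rho(v_-,\ep_0)$ I take $\sigma^\tau x:=[\sigma^\tau\omega,p]$; if $\hat u\in B_\rho(v_-,\ep_0)$ I take $\sigma^\tau x:=[\sigma^\tau\omega,z_-]$ (both brackets are legitimate since $d(\sigma^\tau\omega,p)\le\theta^m$ forces $(\sigma^\tau\omega)_0=p_0=(z_-)_0$, the last equality because $z_-\in\Wloc^u(p)$); in either case $x$ is the unique point of $\Sig$ with $x_j=\omega_j$ for $j\le\tau$ and $\sigma^\tau x$ as prescribed. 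I would first dispatch the routine verifications: $x\in[\I]$ because $\tau\ge|\I|$; $\sigma^\tau x\in\Wloc^u(\sigma^\tau\omega)$ by the definition of the bracket; $x\in\W^s(p)$ because $\sigma^\tau x$ lies in $\Wloc^s(p)$ in the first case and in $\Wloc^s(z_-)\subset\W^s(p)$ in the second; and $d(\sigma^{\tau+\ell}x,p)\le\theta^m$ because $\sigma^\tau x$ lies on the local stable set of $p$, resp.\ of $z_-$, agreeing with $p$, resp.\ $z_-$, on all coordinates $>-m$, so the same holds for $\sigma^{\tau+\ell}x$ with the window shifted by $\ell$, while $d(\sigma^\ell z_-,p)\le\theta^m$ by the choice of $\ell$.

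The next step is to put $u_x$ into a single normal form valid in both cases, namely
\[
u_x=P^\ell\bigl(H^s_{\sigma^\tau x,\,p}\circ H^u_{\sigma^\tau\omega,\,\sigma^\tau x}(\hat u)\bigr).
\]
Using the holonomy--cocycle compatibility of Definition \ref{defn: local holonomy}(2), iterated $\ell$ times, and $\sigma^\ell p=p$, the action of $\A^\ell$ along the orbit segment from $\sigma^\tau x$ to $\sigma^{\tau+\ell}x$ can be rewritten in terms of $P^\ell$ and holonomies. In the first case ($\sigma^\tau x\in\Wloc^s(p)$) the outer holonomy $H^s_{\sigma^{\tau+\ell}x,\,p}$ cancels directly, leaving the normal form. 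In the second case ($\sigma^\tau x\in\Wloc^s(z_-)$ and $z_-\in\Wloc^u(p)$) the surplus holonomies assemble into $\psi_p^{\sigma^\ell z_-}\circ P^\ell$; I would then use \eqref{eq: psi zr} to rewrite this as $P^\ell\circ\psi_p^{z_-}$, simplify $\psi_p^{z_-}\circ H^u_{z_-,p}=H^s_{z_-,p}$ via $H^u_{p,z_-}\circ H^u_{z_-,p}=I$, and compose stable holonomies along $\W^s(p)$ to recover exactly the same normal form.

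Finally I would estimate the inner map $g:=H^s_{\sigma^\tau x,\,p}\circ H^u_{\sigma^\tau\omega,\,\sigma^\tau x}$ and conclude through the defining property of $\ell$ that $P^\ell$ maps $\RP^1\setminus B_\rho(v_-,\ep_0/2)$ into $B_\rho(v_+,\ep_0/2)$. In the first case the points $\sigma^\tau\omega,\ [p,\sigma^\tau\omega],\ p,\ [\sigma^\tau\omega,p]$ form a holonomy rectangle all of whose edges have length $\le\theta^m$, and $g$ is exactly the map $H^s_{d,c}\circ H^u_{a,d}$ attached to it, so Lemma \ref{lem: m(ep)}(2) gives $\rho(g(v),v)<\ep_0/2$ for all $v$; since $\hat u\notin B_\rho(v_-,\ep_0)$ this puts $g(\hat u)\notin B_\rho(v_-,\ep_0/2)$. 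In the second case the point is that $g$ is $C^0$-close on $\RP^1$ to the twisting loop $\psi_p^{z_-}=H^s_{z_-,p}\circ H^u_{p,z_-}$: since $\sigma^\tau\omega$ and $\sigma^\tau x=[\sigma^\tau\omega,z_-]$ lie within $\theta^m$ of $p$ and of $z_-$ respectively (uniformly in $\I$), uniform continuity of $H^u$ on its compact domain bounds the $\RP^1$-distance between $H^u_{\sigma^\tau\omega,\,\sigma^\tau x}$ and $H^u_{p,z_-}$ by a quantity $\to0$ with $m$, while $H^s_{\sigma^\tau x,\,p}=H^s_{z_-,p}\circ H^s_{\sigma^\tau x,\,z_-}$ with $\|H^s_{\sigma^\tau x,\,z_-}-I\|\le C\theta^{m\beta}$ by \ref{eq: b}; so after enlarging $m$ if necessary, $\rho(g(v),\psi_p^{z_-}(v))<\ep_0/2$ for all $v$. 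Since $\hat u\in B_\rho(v_-,\ep_0)$, the twisting inequality \eqref{eq: psi twisting} gives $\rho(\psi_p^{z_-}(\hat u),v_-)>\ep_0$, hence again $g(\hat u)\notin B_\rho(v_-,\ep_0/2)$. In either case the property of $\ell$ then yields $u_x=P^\ell(g(\hat u))\in B_\rho(v_+,\ep_0/2)$.

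I expect the main obstacle to be the second-case estimate $g\approx\psi_p^{z_-}$. Unlike the first case, the homoclinic excursion of $z_-$ means that not all holonomies occurring in $g$ join nearby base points, so one has to isolate the genuinely small ``corner corrections'' — governed by \ref{eq: b} and the uniform continuity of $H^{s/u}$ over the compact set of admissible base-point pairs — from the non-negligible loop $\psi_p^{z_-}$ itself, and to track the bounded, $m$-independent distortion of $\psi_p^{z_-}$ on $\RP^1$ when passing from group-closeness to $C^0$-closeness. As an internal check one can instead write $g$ as $\psi_p^{z_-}$ composed with the rectangle map of the ``thin'' rectangle $[p,\,[\sigma^\tau\omega,p],\,[\sigma^\tau\omega,z_-],\,z_-]$ (one pair of whose edges has length $\le\theta^m$, so Lemma \ref{lem: m(ep)}(1) applies) and with the first-case rectangle map, both close to the identity.
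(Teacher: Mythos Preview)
Your argument is correct and follows the same two--case strategy as the paper, but the paper organizes the ``near $v_-$'' case more efficiently by making one small change: it splits cases on the \emph{transported} direction
\[
u_\omega:=H^s_{\s^\tau\omega,\,p}\bigl(u(\A^\tau(\omega))\bigr)
\]
rather than on the raw direction $\hat u$, and at threshold $\ep_0/2$ rather than $\ep_0$. With this choice the paper obtains in the near case the closed formula
\[
u_x=P^\ell\circ\psi_p^{z_-}\circ H[p,\s^\tau\omega,\s^\tau x,z_-](u_\omega),
\]
which is exactly your ``internal check'' decomposition $g=\psi_p^{z_-}\circ R_1\circ R_0$ after noting $R_0(\hat u)=u_\omega$. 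The point is that only the single rectangle correction $R_1$ now sits between $u_\omega$ and $\psi_p^{z_-}$; since $u_\omega\in B_\rho(v_-,\ep_0/2)$ and Lemma~\ref{lem: m(ep)}(1) moves by $<\ep_0/2$, the input to $\psi_p^{z_-}$ lands in $B_\rho(v_-,\ep_0)$ and \eqref{eq: psi twisting} applies with the given $m=m(\ep_0)$, no enlargement needed. By contrast, your main approach in that case (showing $\rho(g(v),\psi_p^{z_-}(v))<\ep_0/2$ via uniform continuity) does require enlarging $m$ to absorb the fixed Lipschitz constant of $\psi_p^{z_-}$ on $\RP^1$; and your internal check with \emph{your} threshold accumulates two $\ep_0/2$ errors before $\psi_p^{z_-}$, pushing the input to $B_\rho(v_-,2\ep_0)$, outside the domain where \eqref{eq: psi twisting} is stated.

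In the ``away from $v_-$'' case your choice actually coincides with the paper's: since $\s^\tau\omega\in\Wloc^s(p)$ already, your bracket $[\s^\tau\omega,p]$ equals $\s^\tau\omega$, so $x=\omega$ and $u_x=P^\ell u_\omega$ directly. In short: same plan, same $x$'s; the paper's case split on $u_\omega$ at $\ep_0/2$ is what buys the clean one--rectangle estimate and avoids the extra dependence of $m$ on $\psi_p^{z_-}$.
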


\begin{proof}
The construction  of $x \in [\I]$ depends on the following direction
$$u_\omega:=H^s_{\s^\tau \omega,p}u(\A^\tau(\omega)) \in \R\mathbb{P}^1.$$

If $u_\omega \in B_\rho(v_-,\ep_0/2)$, let $x:=\s^{-\tau}[\s^\tau \omega,z_-]$, where the bracket operation is defined in \eqref{eq: local product}.
By using the  defining properties of the holonomies and the definition of the holonomy rectangle, $u_x$ is equal to the following expression:
$$u_x = P^\ell \circ \psi_p^{z-}\circ H[p,\s^\tau \omega ,\s^\tau x, z_-] (u_\omega) \in \RP^1.$$
Since $d(\s^\tau w,p) \leq \theta^m$, the first statement of Lemma \ref{lem: m(ep)} applies to the holonomy rectangle $[p,\s^\tau \omega ,\s^\tau x, z_-]$. It then follows from the assumption $u_\omega \in B_\rho(v_-,\ep_0/2)$, and the choice of $\ep_0>0$ and $\ell \in \N$ that $u_x$ belongs to $B_\rho(v_+,\ep_0/2)$. The last claim of the lemma also follows because $d(\s^{\tau+\ell}x,p)=d(\s^\ell z_-,p) \leq \theta^m$ from the choice of $\ell$.

If instead $u_\omega \not\in B_\rho(v_-,\ep_0/2)$, then set $x := \omega$ and consider $u_x \in \R\mathbb{P}^1$. Since $x$ is equal to $\omega$, the unstable holonomy $H^u_{\s^\tau \omega,\s^\tau x}$ is equal to the identity, and $u_x$ simplifies to $P^\ell u_\omega$ from the properties of the holonomies. Then from the assumption $ u_\omega \not\in B_\rho(v_-,\ep_0/2)$ and choice of $\ell \in \N$, it follows that $u_x$ belongs to $B_\rho(v_+,\ep_0/2)$. Moreover, $d(\s^{\tau+\ell}x,p) = d(\s^{\tau+\ell}\omega,p) \leq \theta^{m+\ell}<\theta^m$.

So, in either cases, we have $x \in [\I] \cap \W^s(p)$ with $d(\s^{\tau+\ell}x,p)\leq \theta^m$ such that $u_x \in \RP^1$ is $\ep_0/2$-close to $v_+ \in \RP^1$.
\end{proof}

Similarly, given $\J \in \L$, we apply the same argument to the adjoint cocycle $\A_*$; this produces two points $\iota, y \in \s^{|\J|}[\J]$ where $y \in \s^{|\J|}[\J] \cap \W^u(p)$ is constructed depending on the direction $u_\iota:= H^{s,*}_{\s^{-\tau(\J)}\iota,p}u(\A_*^{\tau(\J)}(\iota)) \in \RP^1$ such that $d(\s^{-\tau(\J)-\ell} y,p)\leq\theta^m$ and that 
$$
u_y:=H^{s,*}_{\s^{-\tau(\J)-\ell}y,p} \circ \A^\ell_*(\s^{-\tau(\J)}y) \circ H^{u,*}_{\s^{-\tau(\J)}\iota,\s^{-\tau(\J)}y} u (\A_*^{\tau(\J)}(\iota)) \in \RP^1
$$
belongs to $B_\rho(w_+,\ep_0/2)$.

Using $x \in [\I]$ and $y \in \s^{|\J|}[\J]$, we create a new point 
$$\chi:=\s^{-\tau(\I)-\ell} [\s^{\tau(\I)+\ell}x,\s^{-\tau(\J)-\ell}y]$$ 
in $[\I]$. 
Since $d(\s^{\tau(\I)+\ell}x,p)$ and $d(\s^{-\tau(\J)-\ell}y,p)$ are at most $\theta^m$, 
the second statement of Lemma \ref{lem: m(ep)} applies to the rectangle $[p,\s^{\tau(\I)+\ell}x,\s^{\tau(\I)+\ell}\chi,\s^{-\tau(\J)-\ell}y]$.
Then the fact that $u_x$ and $u_y$ are $\ep_0/2$-close to $v_+$ and $w_+$, respectively, together the choice of $\ep_0$ show that the $\rho$-distance between the directions 
$$\A^\ell(\s^{\tau(\I)}\chi)\circ H^u_{\s^{\tau(\I)}\omega, \s^{\tau(\I)}\chi} u(\A^{\tau(\I)}(\omega))
$$
and
$$
\A_*^{\ell}(\s^{\tau(\I)+2\ell}\chi) \circ H^{u,*}_{\s^{-\tau(\J)}\iota,\s^{\tau(\I)+2\ell}\chi}v(\A^{\tau(\J)}(\s^{-\tau(\J)}\iota))$$
are bounded away from $\pi/2$ by at least $3\b/4$. By applying the idea described Remark \ref{rem: linear algebra}, we obtain a connecting word $\K \in \L$ of length $k:=2(\bar{\tau}+m+\ell)$ such that 
$$\|\A(\I\K\J)\| \geq c \|\A(\I)\|\|\A(\J)\|$$ 
for some constant $c>0$ independent of $\I,\J \in \L$. See \cite[Section 4]{park2020quasi} for more details.
\end{proof}

\begin{rem}
In fact, \cite[Theorem A]{park2020quasi} proves more than what is stated in Proposition \ref{prop: typical implies QM}. For weakly typical cocycles $\U_w$, the constants $c,k>0$ from quasi-multiplicativity can be chosen uniformly near $\A$. This implies the continuity of the pressure $\P(\Phi_\A)$ and the equilibrium state $\mu_{\A}$ restricted to the set of weakly typical cocycles $\U_w$; see \cite[Theorem B]{park2020quasi}.

In this direction of results, we remark that Cao, Pesin, and Zhao \cite{cao2019dimension} recently established a more general result that the map $\A \mapsto \P(\Phi_\A)$ is continuous on $C^\alpha(\Sig,\glr)$ using a different approach. See also \cite{feng2014non}.
\end{rem}

\subsection{Alternate proof of Theorem \ref{thm: A} for fiber-bunched cocycles}\label{subsec: alternate thm a} 
In this subsection, we explain how Theorem \ref{thm: A} may alternatively be established for fiber-bunched cocycles $C^\alpha_b(\Sig,\gltwo)$ based on the result of Bochi and Garibaldi \cite{bochi2019extremal}. This method still relies on Proposition \ref{prop: qm then unique eq}, but the approach in obtaining quasi-multiplicativity circumvents invoking Theorem \ref{thm: B}. 
 
Bochi and Garibaldi \cite[Proposition 3.11]{bochi2019extremal} showed that irreducible and strongly fiber-bunched automorphisms of \hol vector bundles over hyperbolic homeomorphisms are uniformly spannable. While they established this results for other usages, we explain below how it applies to thermodynamic formalism of the norm potentials of $\glr$-cocycles.

In our context of $\gltwo$-cocycles, strongly fiber-bunching condition coincides with the usual fiber-bunching condition defined as in Definition \ref{defn: fiber-bunched}. For $\glr$-cocycles with $d\geq 3$,  strong fiber-bunching requires that for all $x \in \Sig$,
$$\|\A(x)\|\cdot\|\A(x)^{-1}\|\cdot \theta^\beta<1$$
for some $\beta \in (0,\alpha)$ depending only on the base dynamic system $(\Sig,\s)$ and the \hol exponent $\alpha$ of $\A$; see \cite{bochi2019extremal} for precise definition.

While irreducibility introduced in Definition \ref{defn: reducible} is formulated for $\gltwo$-cocycles, the definition can be easily extended for $\glr$ cocycles for any $d \in\N$:
\begin{defn}\label{defn: irreducible 2} A fiber-bunched $\glr$-cocycle over $\Sig$ is \textit{irreducible} if there does not exist a proper $\A$-invariant and $H^{s/u}$-invariant sub-bundle.
\end{defn}
Note also that quasi-multiplicativity defined in Definition \ref{defn: qm} also makes sense for $\glr$-cocycles for any $d \in \N$. We now introduce the notion of spannability for fiber-bunched $\glr$-cocycles.

\begin{defn}\cite[Section 3.4]{bochi2019extremal} A fiber-bunched $\glr$-cocycle $\A$ over $\Sig$ is \textit{spannable} if for any $x,y\in \Sig$ and $u \in \R^d$, there exists
\begin{enumerate}
\item $x_1,\ldots,x_d \in \W^u(x)$, and
\item $n_1,\ldots,n_d \in \N_0$ such that the points $y_i:=\s^{n_i}x_i$ all belong to $\W^s(y)$,
\end{enumerate}
with the property that $\{v_i\}_{i=1}^d$ defined by 
\begin{equation}\label{eq: v_i}
v_i := H^s_{y_i,y}\circ \A^{n_i}(x_i)\circ  H^u_{x,x_i} (u)
\end{equation}
forms a basis of $\R^d$.
\end{defn}

In the context of $\glr$-cocycles, the relevant result of \cite{bochi2019extremal} can be formulated as follow:
\begin{prop}\cite[Theorem 3.7]{bochi2019extremal}\label{prop: spannable} 
Let $\A \colon \Sig \to \glr$ be a strongly fiber-bunched and irreducible cocycle. Then $\A$ is spannable. 
\end{prop}
In fact, Bochi and Garibaldi proved a stronger statement \cite[Proposition 3.9]{bochi2019extremal} under the same assumptions of Proposition \ref{prop: spannable}: using the compactness of $\Sig$, they showed that $\A$ is \textit{uniformly spannable}: there exists $k\in \N$ and $C_0>0$ such that $n_1,\ldots,n_d$ can be chosen to be at most $k$ and that a linear map $L \colon \R^d \to \R^d$ sending $\{v_i\}_{i=1}^d$ to an orthonormal basis of $\R^d$ satisfies $\|L\| < C_0$. 

\begin{prop}\label{prop: spannable implies qm}
Suppose a fiber-bunched cocycle $\A \in C^\alpha_b(\Sig,\glr)$ is uniformly spannable. Then $\A$ is quasi-multiplicative.
\end{prop}
\begin{proof}
Let $k$ and $C_0$ be the constants from uniform spannability of $\A$.
Given any $\I,\J \in \L$, let $\bar{x}\in [\I]$ be the point such that $\|\A(\I)\|= \|\A^{|\I|}(\bar{x})\|$, and set $x:=\s^{|\I|}(\bar{x})$. We similarly let $y \in [\J]$ such that $\|\A(\J)\|=\|\A^{|\J|}(y)\|$. Applying uniform spannability to $x,y \in \Sig$ and $u = u(\A^n(\bar{x}))$ gives vectors $\{v_i\}_{i=1}^d$ defined by \eqref{eq: v_i} that span $\R^d$. 

From the condition $\|L\|<C_0$ on a linear map $L$ straightening out $\{v_i\}_{i=1} ^d$ into an orthonormal basis, the $\rho$-distance (i.e., the angle) between each pair $v_i$ and $v_j$, $i \neq j$, is uniformly bounded below by some constant $\ep>0$ depending only on $C_0$. In particular, at least one of them, say $v_t$, satisfies $\rho(v(\A^{|\J|}(y))^\perp,v_t)>\ep/2$; this is similar to the angle gap obtained at the end of the proof for Proposition \ref{prop: typical implies QM}. 

Then Remark \ref{rem: linear algebra} applied to $A =  \A^{|\J|}(y)$, $C = H^s_{y_t,y} \A^{n_t}(x_t)  H^u_{x,x_t}$, and $B=\A^{|\I|}(\overline{x})$ gives 
\begin{align*}
\begin{split}
\|\A^{|\J|}(y)H^s_{y_t,y} \A^{n_t}(x_t)  H^u_{x,x_t}\A^{|\I|}(\overline{x})\| &\geq c\|\A^{|\J|}(y)\|\|\A^{|\I|}(\overline{x})\|\\
& = c\|\A(\I)\| \|\A(\J)\|
\end{split}
\end{align*}
for some $c>0$ that only depends on $\A$, $\ep$, and $k$. Denoting the
cylinder of length $n_t$ containing $x_t$ by $[\K]$, we have $\overline{x}_t:=\s^{-|\I|}x_t \in [\I\K\J]$ with $|\K| \leq k$.  Since the left hand side of the above inequality is equal to $\|H^s_{\s^{n_t+|\J|}x_t,\s^{|\J|}y}\A^{|\I|+n_t+|\J|}(\overline{x}_t)H^u_{\overline{x},\overline{x}_t}\|$, it is uniformly comparable to $\|\A(\I\K\J)\|$ due to the bounded distortion \eqref{eq: bdd distortion} of $\Phi_\A$ and \hol continuity of the canonical holonomies. This establishes the quasi-multiplicativity of $\A$.
\end{proof}

Since strong fiber-bunching is merely the usual fiber-bunching for $\gltwo$-cocycles, in view of Proposition \ref{prop: qm then unique eq} it is clear that Proposition \ref{prop: spannable} and \ref{prop: spannable implies qm} provide an alternative proof of Theorem \ref{thm: A}.	

\section{Application to cocycles over hyperbolic systems other than $(\Sig,\s)$}\label{sec: 6}
We describe how the results in this paper can be applied to certain cocycles over hyperbolic bases other than the subshift $(\Sig,\s)$, including Anosov diffeomorphisms of closed manifolds. Let $f$ be a $C^{1+\alpha}$ Anosov diffeomorphism of a closed manifold $M$. This means that there exist a $Df$-invariant splitting $TM = E^s \oplus E^u$ and constants $C>0$, $\nu \in (0,1)$ such that for all $n \in \N$,
$$\|Df^n|_{E^s}\| \leq C \nu^n ~\text{ and } ~\|Df^{-n}|_{E^u}\| \leq C\nu^n.$$
For a $C^{1+\alpha}$ Anosov diffeomorphism $f$, the stable and unstable bundles $E^s$ and $E^u$ are $\b$-\hol for some $\beta \in (0,\alpha]$.

Denoting the dimension of the unstable bundle $E^u$ by $d$, we then realize $Df|_{E^u}$ as a $\glr$-cocycle over a suitable subshift of finite type $(\Sig,\s)$ as follows: a Markov partition of $f$ \cite{bowen1975ergodic} results in a \hol continuous surjection $\pi \colon \Sig \to M$ such that $f \circ \pi= \pi \circ \s$. By choosing a Markov partition of sufficiently small diameter, we may assume that the image of each cylinder $[j]$ of $\Sig$, $1 \leq j \leq q$, is contained in an open set on which $E^u$ is trivializable. For $x\in [j]$, we let $L_j(x) \colon \R^d \to E^u_{\pi x}$ be a fixed trivialization of $E^u$ over $\pi([j])$. 
Supposing that $\s x \in [k]$, we define
\begin{equation}\label{eq: A lifted from Df}
\A(x):=L_k(\s x)^{-1} \circ D_{\pi x}f|_{E^u} \circ L_j(x).
\end{equation} 
This defines an $\alpha$-\hol $\glr$-cocycle $\A$ over a subshift $(\Sig,\s)$. 

We say the derivative cocycle $Df|_{E^u}$ is \textit{fiber-bunched} if there exists $N\in \N$ such that
$$\|D_xf^N|_{E^u_x}\| \cdot \|(D_xf^N|_{E^u_x})^{-1}\| \cdot \max\{\|D_xf^N|_{E^s_x}\|^\b,\|(D_xf^N|_{E^u_x})^{-1} \|^\b\}<1.$$
When $Df|_{E^u}$ is fiber-bunched, the canonical holonomies $H^{s/u}$ for $Df|_{E^u}$ converge and are $\b$-\hol continuous \cite{viana2008almost}. Similar to Definition \ref{defn: reducible} and \ref{defn: irreducible 2}, we say $Df|_{E^u}$ is \textit{reducible} if there exists a proper $Df|_{E^u}$-invariant and $H^{s/u}$-invariant sub-bundle of $E^u$, and \textit{irreducible} otherwise. 
Via the same Markov partition, the $\beta$-\hol canonical holonomies of $H^{s/u}$ of $Df|_{E^u}$ also lift to the $\beta$-\hol canonical holonomies (denoted again by) $H^{s/u}$ of $\A$ over $(\Sig,\s)$.

The following corollary translates Theorem \ref{thm: A} and \ref{thm: C} for the subadditive potential $\Phi_f$ over $M$ defined by
$$\Phi_{f}:=\{\log \|Df^n|_{E^u}\|\}_{n \in \N}.$$

\begin{cor}
Let $f$ be a transitive $C^{1+\alpha}$ Anosov diffeomorphism of a closed manifold $M$. Suppose that $\dim(E^u)=2$ and that $Df|_{E^u}$ is fiber-bunched. Then
\begin{enumerate}
\item If $Df|_{E^u}$ is irreducible, then $\Phi_f$ has a unique equilibrium state.
\item If $Df|_{E^u}$ is reducible, let $L$ be the $Df|_{E^u}$-invariant and $H^{s/u}$-invariant line bundle. Setting 
\begin{equation}\label{eq: a,c}
a(x):=Df|_{L_x} \text{ and } c(x) := \text{Jac}(Df|_{E^u_x})/a(x),
\end{equation}
$\Phi_f$ has a unique equilibrium state unless $\log|a|$ and $\log|c|$ satisfy two conditions from Theorem \ref{thm: C}, in which case there are exactly two ergodic equilibrium states. 
\end{enumerate}
\end{cor}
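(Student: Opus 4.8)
The plan is to push the statement down to the symbolic model of $f$ and then apply Theorems \ref{thm: A} and \ref{thm: C}. Fix a Markov partition of $f$ of sufficiently small diameter and let $\pi\colon\Sig\to M$ be the associated \hol surjection with $f\circ\pi=\pi\circ\s$, where $(\Sig,\s)$ is a mixing subshift of finite type (a transitive Anosov diffeomorphism of a closed manifold is topologically mixing, so the adjacency matrix may be taken primitive). Using the local trivializations $L_j$ of $E^u$, form the $\gltwo$-cocycle $\A$ over $(\Sig,\s)$ as in \eqref{eq: A lifted from Df}. Since $Df|_{E^u}$ is fiber-bunched its canonical holonomies $H^{s/u}$ converge and are $\b$-\hol, and through the same Markov partition and the $L_j$ these lift to holonomies of $\A$ that converge and are $\b$-\hol; hence $\A\in\HH$. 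Because the $L_j$ and their inverses are uniformly bounded in operator norm, $\log\|\A^n(x)\|$ differs from $\log\|D_{\pi x}f^n|_{E^u}\|$ by a quantity bounded independently of $x\in\Sig$ and $n\in\N$, and such a uniformly bounded additive perturbation changes neither the subadditive pressure nor the generalized exponent $\F(\cdot,\mu)$. Consequently $\P(\Phi_\A)=\P(\{\log\|Df^n|_{E^u}\|\circ\pi\})$ and $\F(\Phi_\A,\mu)=\F(\Phi_f,\pi_*\mu)$ for every $\mu\in\M(\s)$.

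Next I would invoke the standard thermodynamic dictionary across a Markov coding: $\pi$ is finite-to-one and restricts to a bijection between a residual $\s$-invariant subset of $\Sig$ and a residual $f$-invariant subset of $M$, so $\pi_*\colon\M(\s)\to\M(f)$ is surjective and $h_\mu(\s)=h_{\pi_*\mu}(f)$ for all $\mu\in\M(\s)$. Together with the previous paragraph this yields $\P(\Phi_f)=\P(\Phi_\A)$, and $\mu$ is an equilibrium state of $\Phi_\A$ if and only if $\pi_*\mu$ is an equilibrium state of $\Phi_f$. In particular a unique equilibrium state of $\Phi_\A$ pushes forward to a unique equilibrium state of $\Phi_f$; and to transfer the count ``exactly two ergodic equilibrium states'' one uses in addition that the \hol Gibbs states $\mu_{\log|a|},\mu_{\log|c|}$ occurring below give zero mass to the boundary of the Markov partition, so that $\pi_*$ is injective on them and no nontrivial convex combination of their images is ergodic. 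Hence $\pi_*$ restricts to a bijection between the ergodic equilibrium states of $\Phi_\A$ and those of $\Phi_f$, and it suffices to count the former.

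It remains to transfer the reducibility hypotheses. A $Df|_{E^u}$-invariant and $H^{s/u}$-invariant sub-line-bundle $L\subset E^u$ pulls back through the $L_j$ to an $\A$-invariant and bi-holonomy-invariant line bundle over $\Sig$; conversely any such line bundle over $\Sig$ is compatible with the coding and descends to $E^u$ over $M$. Thus $Df|_{E^u}$ is irreducible exactly when $\A$ is. If $Df|_{E^u}$ is irreducible, then $\A$ is irreducible and Theorem \ref{thm: A} gives a unique equilibrium state of $\Phi_\A$, hence of $\Phi_f$; this is case (1). If $Df|_{E^u}$ is reducible with invariant bi-holonomy line bundle $L$, lift $L$ to $\Sig$ and straighten it by a $\b$-\hol conjugacy, obtaining an upper-triangular cocycle $\B$ as in \eqref{eq: B}; by construction the diagonal entries of $\B$ coincide, up to \hol coboundaries coming from the $L_j$ and from the framing of $L$, with the functions $a,c$ of \eqref{eq: a,c}, so by Remark \ref{rem: cohomologous} and the argument in the proof of Corollary \ref{cor: reducible0} the truth of \eqref{eq: (1)} and \eqref{eq: (2)} is unchanged. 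Corollary \ref{cor: alternatives} (equivalently Theorem \ref{thm: C}) then shows that $\Phi_\A$, and hence $\Phi_f$, has a unique equilibrium state unless \eqref{eq: (1)} and \eqref{eq: (2)} hold for $\B$, in which case it has exactly two ergodic equilibrium states; this is case (2).

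The two transfers above are routine. The one genuinely non-formal point is the converse half of the reducibility dictionary: that an $H^{s/u}$-invariant $\b$-\hol line bundle over $\Sig$ is constant on the fibres of $\pi$ and therefore descends to $E^u$ over $M$. This uses that the holonomies lifted from $M$ agree with the coding on the (nonempty, but measure-zero) set where $\pi$ fails to be injective, and it is exactly the content underlying the extension of Propositions \ref{prop: conformal subbundle} and \ref{prop: bochi-garibaldi} to the Anosov setting recorded in Remark \ref{rem: beyond fb}; one can either invoke it through that remark or verify it directly from the structure of the Markov partition. The zero-boundary-mass claims for $\mu_{\log|a|}$ and $\mu_{\log|c|}$ used above are classical for Gibbs states of \hol potentials over mixing subshifts of finite type.
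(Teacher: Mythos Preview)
Your argument is correct, and in the reducible case it is essentially identical to the paper's. In the irreducible case you take a genuinely different route. The paper does \emph{not} apply Theorem~\ref{thm: A} to the lifted cocycle $\A$; instead it re-runs the dichotomy of Theorem~\ref{thm: B} at the level of $M$: if some $D_pf^{\mathrm{per}(p)}|_{E^u}$ has eigenvalues of distinct modulus, Corollary~\ref{cor: irred 1} produces the twisting homoclinic points directly on $M$, their lifts witness that $\A$ is \emph{typical}, and then Propositions~\ref{prop: typical implies QM} and~\ref{prop: qm then unique eq} give a Gibbs equilibrium state which is pushed down using the zero-boundary-mass argument. If no such $p$ exists, Proposition~\ref{prop: conformal subbundle} is applied on $M$ itself, $\Phi_f$ becomes an additive \hol potential, and Bowen's theorem is invoked on $M$ without ever passing to $(\Sig,\s)$. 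The point of the paper's detour is precisely to avoid the step you flag as ``the one genuinely non-formal point'': by exhibiting typicality (or conformality) directly, the paper never needs to know that $\A$ is irreducible, and hence never needs to descend an $H^{s/u}$-invariant line bundle from $\Sig$ back to $M$. Your approach is more modular---it uses Theorem~\ref{thm: A} as a black box and the standard entropy-preserving dictionary for bounded-to-one Markov codings---at the cost of that descent lemma, which you correctly isolate and which does follow from density of the injectivity locus of $\pi$ together with the \hol regularity of the bundle. Both routes are valid; the paper's buys a slightly more self-contained argument, yours buys a cleaner reduction to the symbolic theorems already proved.
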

\begin{proof}
For the irreducible case, the proof follows the proof of Theorem \ref{thm: B} closely. If there exists a periodic point $p \in M$ of period $n\in \N$ such that $D_pf^{n}|_{E^u}$ has simple eigenvalues of distinct norms, then
either $Df^{n}|_{E^u}$ considered as a cocycle over $f^n$ is irreducible or $Df|_{E^u}$ interchanges two bi-holonomy invariant line bundles from Lemma \ref{lem: fixed}.

In the former case,
 Corollary \ref{cor: irred 1} applied to $f^n$ gives homoclinic points $z_{\pm} \in M$ of $p$ whose holonomy loops twist the eigendirections of $D_pf^{n}|_{E^u}$. The points in the subshift $\Sig$ corresponding to $p$ and $z_{\pm}$ ensure that the cocycle $\A^n$ over $(\Sig,\s^n)$ is weakly typical where $\A$ is defined as \eqref{eq: A lifted from Df}. Then Proposition \ref{prop: typical implies QM} and \ref{prop: qm then unique eq} give unique equilibrium state $\mu_\A \in \M(\s)$ of $\Phi_\A$. From the Gibbs property, $\mu_\A$ gives zero mass to $\pi^{-1}(\partial \mathcal{R})$ where $\partial \mathcal{R}$ is the union of the boundaries of the Markov partition (see \cite{bowen1975ergodic}), and hence, descends to the unique equilibrium state for $\Phi_f$. 

In the later case, two bi-holonomy invariant line bundles over $M$ lift to bi-holonomy line bundles over $\Sig$. Proceeding as in the proof of Theorem \ref{thm: A} in Section \ref{sec: thm A} gives a unique equilibrium state $\mu_\A \in \M(\s)$ for $\Phi_\A$ which is either also the unique equilibrium state for $\Phi_{\A^2}$ or an average of two distinct ergodic equilibrium states for $\Phi_{\A^2}$. The same reasoning as the previous paragraph (possibly applied with respect to $\s^2$ and the partition $\mathcal{R}\cap \s \mathcal{R}$) ensures that the projection of $\mu_\A$ is the unique equilibrium state for $\Phi_f$.

If there exists no such periodic point $p\in M$, then Proposition \ref{prop: conformal subbundle} applies to show that $Df|_{E^u}$ is conformal with respect to some $\alpha$-\hol continuous norm. By treating $\Phi_f$ as an additive potential over $M$ using conformality, Proposition \ref{prop: bowen} gives unique equilibrium state for $\Phi_f$. Note that this does not require passing to the subshift $(\Sig,\s)$.

For the reducible case, we pass to the subshift $(\Sig,\s)$ and the cocycle $\A$ defined as in \eqref{eq: A lifted from Df}. The $Df|_{E^u}$-invariant and $H^{s/u}$-invariant line bundle $L$ over $M$ lifts to the $\A$-invariant and $H^{s/u}$-invariant line bundle (also denoted by) $L$ over $\Sig$. By straightening out $L$, we obtain a $\beta$-\hol conjugacy $\CC \colon \Sig \to \gltwo$ of $\A$ into another cocycle $\B$ taking values in upper triangular matrices of the form \eqref{eq: B} with $a,c$ defined as in \eqref{eq: a,c}.
Then Theorem \ref{thm: C} provides criteria for there to be two distinct ergodic equilibrium states for $\Phi_\A$. The same reasoning as above shows that the equilibrium states for $\Phi_\A$ descend to equilibrium states for $\Phi_f$.
\end{proof}

\bibliographystyle{amsalpha}
\bibliography{gl2r_bib}

\providecommand{\bysame}{\leavevmode\hbox to3em{\hrulefill}\thinspace}
\providecommand{\MR}{\relax\ifhmode\unskip\space\fi MR }
\providecommand{\MRhref}[2]{%
  \href{http://www.ams.org/mathscinet-getitem?mr=#1}{#2}
}
\providecommand{\href}[2]{#2}
\begin{thebibliography}{CFH08}

\bibitem[Bar96]{barreira1996non}
Luis~M Barreira, \emph{A non-additive thermodynamic formalism and applications
  to dimension theory of hyperbolic dynamical systems}, Ergodic Theory and
  Dynamical Systems \textbf{16} (1996), no.~5, 871--927.

\bibitem[BG19]{bochi2019extremal}
Jairo Bochi and Eduardo Garibaldi, \emph{Extremal norms for fiber-bunched
  cocycles}, Journal de l'\'Ecole polytechnique --- Math\'ematiques \textbf{6}
  (2019), 947--1004.

\bibitem[Bow72]{bowen1972entropy}
Rufus Bowen, \emph{Entropy-expansive maps}, Transactions of the American
  Mathematical Society \textbf{164} (1972), 323--331.

\bibitem[Bow74]{bowen1974some}
\bysame, \emph{Some systems with unique equilibrium states}, Theory of
  computing systems \textbf{8} (1974), no.~3, 193--202.

\bibitem[Bow75]{bowen1975ergodic}
\bysame, \emph{Equilibrium states and the ergodic theory of anosov
  diffeomorphisms}, Lecture Notes in Mathematics, vol. 470, Springer-Verlag,
  1975.

\bibitem[BV04]{bonatti2004lyapunov}
Christian Bonatti and Marcelo Viana, \emph{Lyapunov exponents with multiplicity
  1 for deterministic products of matrices}, Ergodic Theory and Dynamical
  Systems \textbf{24} (2004), no.~5, 1295--1330.

\bibitem[CFH08]{cao2008thermodynamic}
Yongluo Cao, Dejun Feng, and Wen Huang, \emph{The thermodynamic formalism for
  sub-additive potentials}, Discrete and Continuous Dynamical Systems
  \textbf{20} (2008), no.~3, 639--657.

\bibitem[CP20]{call2020k}
Benjamin Call and Kiho Park, \emph{The k-property for subadditive equilibrium
  states}, arXiv preprint arXiv:2004.13087 (2020).

\bibitem[CPZ19]{cao2019dimension}
Yongluo Cao, Yakov Pesin, and Yun Zhao, \emph{Dimension estimates for
  non-conformal repellers and continuity of sub-additive topological pressure},
  Geometric and Functional Analysis \textbf{29} (2019), no.~5, 1325--1368.

\bibitem[Fen11]{feng2011equilibrium}
De-Jun Feng, \emph{Equilibrium states for factor maps between subshifts},
  Advances in Mathematics \textbf{226} (2011), no.~3, 2470--2502.

\bibitem[FK11]{feng2010equilibrium}
De-Jun Feng and Antti K{\"a}enm{\"a}ki, \emph{Equilibrium states of the
  pressure function for products of matrices}, Discrete and Continuous
  Dynamical Systems \textbf{30} (2011), no.~3, 699--708.

\bibitem[FS14]{feng2014non}
De-Jun Feng and Pablo Shmerkin, \emph{Non-conformal repellers and the
  continuity of pressure for matrix cocycles}, Geometric and Functional
  Analysis \textbf{24} (2014), no.~4, 1101--1128.

\bibitem[KS10]{kalinin2010linear}
Boris Kalinin and Victoria Sadovskaya, \emph{Linear cocycles over hyperbolic
  systems and criteria of conformality}, Journal of Modern Dynamics \textbf{4}
  (2010), no.~3, 419--441.

\bibitem[KS13]{kalinin2013cocycles}
\bysame, \emph{Cocycles with one exponent over partially hyperbolic systems},
  Geometriae Dedicata \textbf{167} (2013), no.~1, 167--188.

\bibitem[Mis76]{misiurewicz1976topological}
Michal Misiurewicz, \emph{Topological conditional entropy}, Studia Mathematica
  \textbf{55} (1976), no.~2, 175--200.

\bibitem[Par20]{park2020quasi}
Kiho Park, \emph{Quasi-multiplicativity of typical cocycles}, Communications in
  Mathematical Physics \textbf{376} (2020), no.~3, 1957--2004.

\bibitem[Via08]{viana2008almost}
Marcelo Viana, \emph{Almost all cocycles over any hyperbolic system have
  nonvanishing lyapunov exponents}, Annals of Mathematics \textbf{167} (2008),
  no.~2, 643--680.

\bibitem[Wal00]{walters2000introduction}
Peter Walters, \emph{An introduction to ergodic theory}, Graduate Texts in
  Mathematics, vol.~79, Springer-Verlag, 2000.

\end{thebibliography}
\end{document}